\definecolor{olive}{rgb}{0.3, 0.4, .1}
\definecolor{fore}{RGB}{249,242,215}
\definecolor{back}{RGB}{51,51,51}
\definecolor{title}{RGB}{255,0,90}
\definecolor{dgreen}{rgb}{0.,0.6,0.}
\definecolor{gold}{rgb}{1.,0.84,0.}
\definecolor{JungleGreen}{cmyk}{0.99,0,0.52,0}
\definecolor{BlueGreen}{cmyk}{0.85,0,0.33,0}
\definecolor{RawSienna}{cmyk}{0,0.72,1,0.45}
\definecolor{Magenta}{cmyk}{0,1,0,0}
\newtheorem{defn}{Definition}[section]
\newtheorem{thm}[defn]{Theorem}
\newtheorem{lem}[defn]{Lemma}
\newtheorem{prop}[defn]{Proposition}
\newtheorem{cor}[defn]{Corollary}
\newtheorem{rmk}[defn]{Remark}
\newtheorem{qsn}[defn]{Question}
\newcommand{\R}{\mathds{R}}
\newcommand{\x}{\times}
\newcommand{\f}{\frac}
\newcommand{\D}{\mathcal{D}}
\newcommand{\K}{\mathds{k}}
\newcommand{\F}{\mathscr{F}}
\newcommand{\GG}{\mathscr{G}}
\newcommand{\PB}{\mathscr{P}_B}
\newcommand{\QB}{\mathscr{Q}_B}
\newcommand{\PV}{\mathscr{P}_V}
\newcommand{\PW}{\mathscr{P}_W}
\newcommand{\bu}{\bullet}
\newcommand{\s}{\mathscr{S}}
\newcommand{\EB}{\mathcal{E}_B}
\newcommand{\EV}{\mathcal{E}_V}
\newcommand{\EW}{\mathcal{E}_W}
\title[Energy relative to Microlocal Projector]{Quantum Speed Limit and Categorical Energy relative to Microlocal Projector}
\author{Sheng-Fu Chiu}
\email{chiu@ncts.tw}
\address[Sheng-Fu Chiu]{National Taiwan University, Taiwan}
\subjclass[2020]{53D35, 55N31, 35A27, 37A10, 44A35, 18G80}
\keywords{algebraic analysis, microlocal projector, displacement energy, quantum speed limit}
\begin{document} 
\maketitle

\begin{abstract}
Inspired by recent developments of quantum speed limit we introduce a categorical energy of sheaves in the derived category over a manifold relative to a microlocal projector. We utilize the tool of algebraic microlocal analysis to show that with regard to the microsupports of sheaves, our categorical energy gives a lower bound of the Hofer displacement energy. We also prove that on the other hand our categorical energy obeys a relative energy-capacity type inequality. As a by-product this provides a sheaf-theoretic proof of the positivity of the Hofer displacement energy for disjointing the zero section $L$ from an open subset $O$ in $T^*L$, given that $L \cap O \neq \emptyset$.
\end{abstract}

\setcounter{tocdepth}{2}
\tableofcontents

\section{Introduction}

\subsection{Quantum speed limit versus displacement energy}
Among numerous formulations of Heisenberg's famous uncertainty principle, the uncertainty between energy and time deserves a special place. This is due to the general fact that the time is rather an intrinsic variable parametrizing the system evolution than a physical quantity waiting for determination. It was the groundbreaking work of Mandelstam and Tamm \cite{MT91} that firstly claimed that the mysterious energy-time relation should be understood by an intrinsic bound, in terms of the deviation of the Hamiltonian, on how fast the evolution of any quantum system with given energy could dislocate a quantum state in a distinguishable (orthogonal) way. Margolus and Levitin \cite{ML98} pointed out that such bound could be reformulated using the averge of the Hamiltonian and coined the terminology \emph{quantum speed limit}. Later the two bounds were unified by Levitin and Toffoli \cite{LT09}. We also refer the readers to \cite{AA90} where Anandan and Aharonov developed a geometric approach towards all quantum evolutions and gave a new time-energy uncertainty principle in terms of the distance along the evolution curve measured by the Fubini-Study metric.

In their pioneering work \cite{CP18}, Charles and Polterovich revealed a wonderful mathematical link between quantum speed limit and symplectic displacement. Given a symplectic manifold $M$ and given $A,B\subset M$, we say $A$ is \emph{displaceable} from $B$ if there exists a compactly supported Hamiltonian diffeomorphism $h:I\x M\rightarrow M$ such that its time-one map satisfies $h_1(A)\cap B=\emptyset$. 
\begin{defn}
	The Hofer displacement energy for displacing $A$ from $B$ is defined to be
	\begin{equation}
	e(A,B):= \inf_{H}\{ \|H\| \,|\, h_1(A)\cap B=\emptyset \},
	\end{equation}
	where $h$ is the compactly supported Hamiltonian diffeomorphism generated by the Hamiltonian function $H: I\x M\rightarrow \R$, and the mean oscillation norm $\|H\|$ is defined to be
	\begin{equation}
	\|H\|:=\int_0^1 (\max_M H_s - \min_M H_s) ds.
	\end{equation}
\end{defn}

Charles-Polterovich's \cite{CP18} focused on the Berezin-Toeplitz quantization of a closed quantizable K\"{a}hler manifold $M$. Such quantization assigns to any Hamiltonian function $H$ on $M$ an infinite family of Toeplitz operators $T_m(H):\mathcal{H}_m\rightarrow\mathcal{H}_m$ and to each quantum state in the family $\mathcal{H}_m$  a Borel  probability measure on $M$ as its classical state. Here the index $m\in\mathbb{N}$ denotes the inverse of the Planck constant $\hbar$. They proved that (1) if the Schr\"{o}dinger flow of $T_m(H)$ asymptotically (in the limit $m\rightarrow\infty$) dislocates the quantum state then the Hamiltonian flow of $H$ displaces the support of the classical state from itself, and (2) if the Hamiltonian flow of $H$ displaces the microsupport of a quantum state then the Schr\"{o}dinger flow of $T_m(H)$ asymptotically dislocates this quantum state. In the former case, they deduced that the quantum speed limit asymptotically bounds the self-displacement energy. Furthermore, the rate of dislocation (in terms of the asymptotic behavior of $m\rightarrow\infty$) governs the competition between the rigidity and the flexibility on the quantum side of Katok's Basic Lemma (see also \cite{Katok73} and \cite{OW05}).

\subsection{Quantum speed limit beyond quantum mechanics} Our work is partially inspired by the recent theoretical breakthroughs made by Okuyama-Ohzeki's \cite{OO18} and Shanahan-Chenu-Margolus-del Campo's \cite{SCMC18} in the field of physical science. The two teams surprisingly discovered that the essence of quantum speed limit, despite its emergence from the quantum-mechanical study of the energy-time uncertainty relation, is rather a universal dynamical phenomenon for information-preserving time  evolution processes. Among them Okuyama and Ohzeki \cite{OO18} analyzed the classical Liouville equation of phase-spacial distributions to obtain a \emph{fundamental speed limit}, and Shanahan, Chenu, Margolus, and del Campo \cite{SCMC18} investigated the motion of the Wigner phase representations and determined the bounds of semiclassical and classical speeds in terms of the norm of the Moyal product. 

A remarkable feature of their results is that such fundamental speed limits hold nontrivially even in the limit of vanishing Planck constant. Similar to their precedents in the field, both of approaches in \cite{OO18,SCMC18} utilized the formalism of unitary evolutions on Hilbert spaces. 

The present paper is an attempt to answer the following natural question:
\begin{qsn}
Other than the Hilbert space formalism, is there any theory regarding states and their evolutions which manages to treat the fundamental speed limit in a way more consistent with the topology of symplectic displacement? Moreover, is such theory capable to deal with symplectic rigidity problems beyond self-displacements?
\end{qsn}

We would like to work in the realm of certain differential graded categories $\D(Q\x\R)$ of sheaves over a manifold $Q$ called Tamarkin category (see Subsection \ref{defTamarkin} and Remark \ref{hbar}). In this formulation, a Hilbert space $\mathcal{H}$ is replaced with the Tamarkin category $\D(Q\x\R)$, and quantum states as vectors in $\mathcal{H}$ are replaced with sheaves as objects in $\D(Q\x\R)$. Likewisely, the scalar field of $\mathcal{H}$ is replaced with the category of chain complexes $D(\text{pt})$. For two states $\F$ and $\GG$ in $\D(Q\x\R)$ there is no inner product but rather a chain complex $Rhom(\F,\GG)$ of homomorphisms, and instead of genuine orthogonality there are left and right derived orthogonalities.

For a time-parametrized family of states $\{\F_s\}_{s\in I}$, the fidelity between the initial $\F_0$ and the final $\F_1$ may be defined as the complex $Rhom(\F_0,\F_1)$. The notion of the fundamental speed limit is about estimating the minimal time for the fidelity to vanish, in terms of the given Hamiltonian, which is universal for any initial state. Dually, for symplectic topological purpose we ask for the least energy for the fidelity $Rhom$ to vanish, in terms of the given initial state, which is universal for any dislocating evolution in a unit of time. Moreover, to treat symplectic problems beyond self-displacements, we require the evolution to dislocate our initial state from another given subsystem. In this paper such a subsytem is a geometrically related subcategory of $\D(Q\x\R)$ and can be characterized by a \emph{microlocal projector}.

Given an open subset $B$ its microlocal projector $\PB$ is by definition a functor that projects $\D(Q\x\R)$ to a subcategory $\D_B(Q\x\R)$. Here $\D_B(Q\x\R)$ is the left semiorthogonal complement of a natural subcategory $\D_{T^*Q\setminus B}(Q\x\R)$ satisfying certain microlocal conditions (see Definition \ref{Tamarkinsub} and Definiton \ref{admissible}). We ask for the energy needed to evolve from one sheaf to another sheaf that is semiorthogonal (vanishing fidelity) to $\D_B(Q\x\R)$. This leads to the notion of \emph{relative categorical energy}, in the form of a function $e_B: \D(Q\x\R)\rightarrow \R_{\geq0}$ (see Definition \ref{categorical energy}).

\begin{center}

	\begin{longtable}{ |c | c | c | c|}
		
		\hline
		\textbf{\makecell{Categorification \\of dynamics}} & \textbf{\makecell{Quantum\\ dynamics}} & \textbf{\makecell{Classical\\ dynamics}} & \textbf{\makecell{Sheaf theory}} \\ \hline
		
		\textbf{State}	& $\psi\in\mathcal{H}$ & Lagrangian $A\subset T^*Q$& $\F\in\D(Q\x\R)$  \\ \hline
		
		\textbf{Evolution}	& \makecell{Unitary operator\\ $U:\mathcal{H}\rightarrow\mathcal{H}$ }& \makecell{Hamiltonian\\ diffeomorphism \\ $h:T^*Q\rightarrow T^*Q$ } & \makecell{Convolution functor \\$\bu\s:\D(Q\x\R)\rightarrow\D(Q\x\R)$ } \\ \hline
		
		\textbf{Subsystem} &	Subspace  $\mathcal{V}\subset\mathcal{H}$  & Open ball $B\subset T^*Q$ & \makecell{Subcategory \\ $\D_B(Q\x\R)\subset\D(Q\x\R)$}  \\ \hline
		
		\textbf{Fidelity}	& $L^2$-inner product & \makecell{Set-theoretic\\ intersection} & \makecell{Derived\\ homomorphisms}  \\  \hline 
		
		\makecell{\textbf{Displacement}	\\ \textbf{from given}\\ \textbf{ subsystem}}& $U(\psi)\in \mathcal{V}^\perp$ & $h(A)\cap B =\emptyset$ & \makecell{$\F\bu\s\in {\D_{B}}^{\perp,l}$ } \\ \hline
		
		\textbf{ \makecell{Energy of \\ given \\evolution}}	 & \makecell{ Operator norm\\ $\int_0^1 \|U_s\|_{\text{op}}ds$} & \makecell{Hofer norm\\$\int_0^1 \| f_s\|_{\text{Hofer}} ds$}  &  \makecell{ Interleaving distance\\ $(-)\mapsto d_{\text{inter}}((-)\bu\s,-)$ } \\ \hline
		
	\end{longtable}

\end{center}

Our first nolvelty is that it is not $\PB$ but its right adjoint functor $\EB$ that enables us to define the categorical energy $e_B$. Note that the evolutions we care about in symplectic geometry are driven by classical Hamiltonians. Namely, given a Hamiltonian flow $h_s$ on $T^*Q$ there exists a sheaf of kernel $\s_s$ parametrized by the time $s$, such that the convolution operation with $\s_s$ gives a categorical action on $\D(Q\x\R)$. This recipe is called sheaf quantization, mainly due to the fact that the diffeomorphism $h_s$ moves the micro-supports of sheaves under the categorical evolution respectively (see Theorem \ref{quantization} and Remark \ref{quantizationrmk}). In this situation our categorical evolutions are by no means unitary in the usual sense as their Hilbert space counterparts are. So it is crucial to point out that for our purpose this right adjoint functor $\EB$ coorperates with the sheaf quantization better than $\PB$ does.

In addition to the fidelity $Rhom$ and the evolution $\s$, another piece of ingredience we need is energy measurement. This is achieved by estimating the persistency acquired by a state before and after an evolution acting on the category $\D(Q\x\R)$. The sheaf theoretic measurement of persistency relies on the notion of interleaving distance firstly introduced by Kashiwara and Schapira \cite{KS18} and further adapted by Asano and Ike \cite{AI20} to a non-balanced version. In Asano-Ike's \cite{AI20} they extended Tamarkin's non-displaceability theorem in a quantitative way and remarkably related a persistence-like categorical distance to the Hofer displacement energy. In our definition of $e_B$ this persistence-like distance is used to measure how much energy is needed to eliminate the fidelity of the initial sheaf where the fidelity relative to $B$ is detected by applying the functor $\EB$. This way the fundamental speed limit is put under the (dualized) context of sheaf-theoretic persistency.

The second novelty of our approach is the independency of Tamarkin's separation theorem, an important result on which Asano-Ike's theory \cite{AI20} is premised. Moreover, in the present paper we make no assumption to any positivity result from hard symplectic geometry while such assumption was made in the proof of \cite{CP18} by Charles and Polterovich that dislocations yield displacements. Another difference between our approach and \cite{AI20,CP18} is that we are able to estimate the Hofer displacement energy of mixed types such as displacing a Lagrangian from an open set. With the aforementioned categorical energy $e_B(-)$ relative to $B$, our main result is:

\begin{thm}[Theorem \ref{comparison}]
Let $B$ be an open bounded subset of $T^*Q$. Suppose $A$ is a closed subset of $T^*Q$, then for any object $\F\in\D_A(Q\x\R)$ the following categorical-Hofer inequality holds:
\begin{equation}
e_B(\F)\leq e(A,B).
\end{equation}
\end{thm}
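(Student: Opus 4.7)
The plan is to convert any Hamiltonian that displaces $A$ from $B$ into a sheaf-theoretic interleaving that forces the $\PB$-projection of $\F$ to collapse after an $\R$-shift of size at most $\|H\|$.

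First, for each $\epsilon>0$ I would pick a compactly supported Hamiltonian $H$ with $\|H\|<e(A,B)+\epsilon$ and $h_1(A)\cap B=\emptyset$, and invoke the Guillermou--Kashiwara--Schapira sheaf quantisation to obtain a kernel $K_H\in\D(Q\x Q\x\R)$ whose convolution realises the Hamiltonian isotopy and whose microsupport tracks the graph of $h_t$ conified by the action of $H$. The standard microsupport bound for sheaf convolution then gives that the reduced microsupport of $K_H\circ\F$ is contained in $h_1(A)$, hence is disjoint from $B$; equivalently $K_H\circ\F\in\D_{T^*Q\setminus B}(Q\x\R)$. Since $\D_B(Q\x\R)$ is by construction the left semiorthogonal complement of $\D_{T^*Q\setminus B}(Q\x\R)$, this membership makes $K_H\circ\F$ invisible to $\PB$ and to its right adjoint.

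Next I would exploit the GKS data to extract canonical morphisms $\F\to T_{s*}(K_H\circ\F)$ and $K_H\circ\F\to T_{s*}\F$ for some $s\leq\|H\|$, where $T_c$ denotes translation by $c$ along the $\R$-factor. This is where the mean oscillation norm enters the estimate: the action integral of $H$ can be renormalised by an $x$-independent function of $t$ sandwiched between $\min_M H_t$ and $\max_M H_t$, giving a shift bounded by $\|H\|$ rather than $2\|H\|$ or $\sup|H|$. Feeding these morphisms through the right adjoint of $\PB$ that defines $e_B$ and using the vanishing above, the interleaving descends to an $\|H\|$-interleaving between the $B$-projection of $\F$ and the zero object. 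By Definition \ref{categorical energy} this forces $e_B(\F)\leq\|H\|<e(A,B)+\epsilon$, and letting $\epsilon\to 0$ yields the theorem.

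The hard part will be matching the interleaving parameter to $\|H\|$ exactly and checking that $\PB$ (or its right adjoint) respects the resulting shift morphisms. The first point hinges on a careful choice of generating function for the isotopy, in the spirit of Asano--Ike; the second rests on the compatibility of the microsupport estimate for convolution with the Tamarkin-type semiorthogonality cutting out $\D_B(Q\x\R)$, which must be established in the more general microlocal-projector framework developed in this paper rather than imported directly from the Tamarkin-projector case.
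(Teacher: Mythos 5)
Your proposal is correct and reproduces the paper's own argument essentially step for step: sheaf quantization of a near-optimal displacing Hamiltonian, vanishing of $\EB$ on the displaced object via semiorthogonality (Proposition \ref{vanishing}), the Asano--Ike stability bound $d(\F,\F\circ\s_1)\le\|H\|$ (Theorem \ref{bound}), and commutation of $\EB$ with the translation functors (Lemma \ref{functorial}) to push the interleaving through to $\EB(\F)$ and the zero object. The only cosmetic differences are the $\epsilon$-and-infimum packaging versus directly taking $\inf_H$, and writing a single shift $s$ where the interleaving really involves a pair $a,b$ with $a+b\le\|H\|$.
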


This leads to a direct formulation of a nontrivial lower bound of the Hofer displacement energy of mixed type using algebraic microlocal analysis:
 
\begin{thm}[Relative energy-capacity inequality, Theorem \ref{energy-capacity}]
Let $L$ be a smooth manifold of dimension $n$. Suppose $\jmath:B(r)\hookrightarrow T^*L$ is a symplectically embedded open ball of $T^*L$ relative to the zero section $L$ (that is, $ \jmath^{-1}(L)=\R^n \cap  B(r)$).  Let $U=\jmath(B(r))$, then in $T^*L$ one has
\begin{equation}
e(L,U)\geq \f{1}{2} \pi r^2.
\end{equation}

In particular, suppose $O$ is an open subset of $T^*L$ such that $O\cap L\neq\emptyset$. Then $e(L,O)$ is strictly positive.\end{thm}

\subsection{The roles of $\R$ and $\hbar$}\label{hbar}

The extra $\R$-direction in $\D(Q\x\R)$ was proposed by Tamarkin \cite{Tamarkin18} in order to upgrade sheaf theoretic method to treat non-conic Lagrangians. Moreover, the translation along $\R$ induces a natural structure of Novikov ring action and thus strengthens the link between sheaf theory and Fukaya category. 

In the microlocal sheaf intepretation of deformation quantization theory, this extra $\R$-variable may be regarded as the real version of the Fourier-Laplace dual of the Planck constant $\hbar$. When it comes to $\mathscr{D}$-module theory (by taking  $\hbar\rightarrow 1$), the corresponding microlocal sheaf theory specializes to sheaves on $Q$ and shelters the informationless conic dimension. We refer the reader to Kuwagaki's works \cite{Ku20,Ku22} for a thorough explanation of the  relations between $\hbar$ and $\R$. We shall  mention that in \cite{Ku20} Kuwagaki introduced a $\R$-equivariant version of Tamarkin's theory and applied it to treat non-exact Lagrangians such as spectral curves of $\hbar$-deformation quantizations over the Novikov ring. In \cite{Ku22} he proposed a $\hbar$-enhanced Riemann-Hilbert correspondence using equivariant sheaves.

In the present paper we focus on the original Tamarkin category without passing to $\R$-equivariant sheaves. Our work can be seen as an attempt to categorify the dynamics across quantum and classical mechanics on the most elementary level. We look forward to the future research which will involve the $\R$-equivariant framework and the $\hbar$-deformation quantization theory.

\section{Review of microlocal theory of sheaves}

\subsection{Sheaves and their microlocal supports}

Let $Q$ be a smooth manifold. We denote by $(q,p)$ the points of the cotangent bundle $T^*Q$ where $q\in Q$ and $p\in T^*_q Q$. Let $\K$ be a ground ring and let $D(Q)$ be the derived differential graded category of sheaves of $\K$-modules on $Q$. We do not require objects of $D(Q)$ to be cohomologically bounded.

Given any object $\F$ of $D(Q)$, Kashiwara and Schapira \cite{KS90} defined its \textit{microsupport} (an abbreviation of microlocal singular support, denoted by $SS(\F))$ by the closure of those $(q_0,p_0)\in T^*Q$ such that there exists a $C^1$ function $\phi:Q\rightarrow\mathbb{R}$ satisfying $d\phi(q_0)=p_0$ and the following ill propagation condition:
\begin{equation}
(R\Gamma_{\{q\in Q|\phi(q)\geq\phi(q_0)\}}\F)_{q_0}\ncong0 .
\end{equation}

In other words, $SS(\F)\subset T^*Q$ is the minimal closed $\R_{>0}$-conic subset containing all singular codirections of $\F$. Here by \emph{singular} we mean the complement of all the codirections along which the derived sections of $\F$ extend microlocally. It is easy to see that the application $SS$ is additive with respect to the direct sums ($SS(\F\oplus\GG)=SS(\F) \bigcup SS(\GG)$) and is independent of the cohomological shift of gradings ($SS(\F[1])=SS(\F)$). Incidentally, we would like to recall the readers that the notion of microsupport is genuinely derived: one in general cannot fully recover the subset $SS(\F)$ simply from the collection of the subsets $\{SS(H^j(\F)) \,|\, j\in\mathbb{Z}  \}$.

\subsection{The Tamarkin categories $\D$}\label{defTamarkin}

We are peculiarly interested in the sheaf-theoretic approximation of geometric objects in $T^*Q$ by microsupports. It is among the main results of \cite{KS90} that for any $\F$ in $D(Q)$ the corresponding microsupport $SS(\F)$ is always a closed $\R_{>0}$-conic coisotropic subset of $T^*Q$. On the other hand, many coisotropic subsets of interest in symplectic geometry and topology do not respect the $\R_{>0}$-conic structure, nor do the Hamiltonian diffeomorphisms that act on them. In \cite{Tamarkin18}, Tamarkin tackled this problem by considering certain subcategories of the category $D(Q\x\R)$ in which the behaviors of the subsets of $T^*Q$ are more tractable than in the category $D(Q)$. 

We denote the coordinate of $T^*(Q\x\R)$ by $(q,p,z,\zeta)$ where $(q,p)\in T^*Q$, $z\in\R$, and $\zeta\in T^*_z \R$. Following \cite{Tamarkin18}, we consider the full subcategory
\begin{equation}
 D_{\zeta\leq0}=\{\mathcal{F}\in D(Q\x\R) | SS(\F)\subset \{\zeta\leq0\}\}.
\end{equation}
of $D(Q\x\R)$ characterized by being microlocally singular at the negative extra codirection.

\begin{defn}[Tamarkin Category]
We define 
$\D(Q\x\R):= D_{\zeta\leq0}(Q\times\R)^{\perp,l}$. Here the left semiorthogonal complement is taken with respect to $Rhom$ in the background category $D(Q\times\R)$.
\end{defn}

An astonishing fact about the Tamarkin category $\D(Q\x\R)$ is that there exists a quotient functor $D(Q\x\R)\rightarrow \D(Q\x\R)$ which can be represented by a sheaf kernel convolution. Before describing this kernel, let us introduce the convolution product $\bu$ of sheaf kernels as follows.

Let $\F\in D(Q_1\x Q_2 \x\R)$ and $\GG \in D(Q_2\x Q_3 \x\R)$. We define 
\begin{equation}
\F\bu_{Q_2}\GG= R\pi_{13\sigma !}({\pi_{Q12R1}}^{-1}\F\otimes{\pi_{Q23R2}}^{-1}\GG)\in D(Q_1\x Q_3\x\R)
\end{equation}
where $\pi_{Q12R1}:Q_1\x Q_2\x Q_3\x\R_1\x\R_2\rightarrow Q_1\x Q_2\x\R_1$ and $\pi_{Q23R2}:Q_1\x Q_2\x Q_3\x\R_1\x\R_2\rightarrow Q_2\x Q_3\x\R_2$ are projections and $\pi_{Q13R\sigma}:Q_1\x Q_2\x Q_3\x\R_1\x\R_2\rightarrow Q_1\x Q_3\x \R$ maps $(q_1,q_2,q_3,z_1,z_2)$ to $(q_1,q_3,z_1+z_2)$.

When $Q_2$ and $Q_3$ are single points, by abuse of notation we denote by $\F\bu\GG$ the convolution product $\F\bu_{Q_2}\GG$ for $\F\in D(Q\x\R)$ and $\GG\in D(\R)$. This way convoluting with a given object of $D(\R)$ gives rise to a functor $D(Q\x\R)\rightarrow D(Q\x\R)$.

For any nonempty interval $A\subset \R$, denote by $\K_A\in D(\R)$ the constant sheaf supported by $A$. We also denote $\K_0=\K_{\{z=0\}}$, $\K_{\geq c}=\K_{[c,\infty)}$ and $\K_{>c}=\K_{(c,\infty)}$. For any $\F\in D(Q\x\R)$ we have $\F\bu\K_0=\F$. Moreover:

\begin{prop}\cite[Proposition 2.1, Proposition 2.2]{Tamarkin18}\label{Tamarkin}
Convolution with the exact triangle 
\begin{equation}
\K_{\geq0}\rightarrow\K_0\rightarrow\K_{>0}[1]\xrightarrow{+1}
\end{equation}
gives rise to the semiorthogonal decomposition with respect to the triple ($\D(Q\x\R)$, $D(Q\x\R)$, $D_{\zeta\leq0}(Q\times\R)$). To be more precise, for any $\F\in D(Q\x\R)$, we have the following exact triangle
\begin{equation}
	\F\bu\K_{\geq0}\rightarrow\F\bu\K_0(=\F)\rightarrow\F\bu\K_{>0}[1]\xrightarrow{+1}
\end{equation}
such that $\F\bu\K_{\geq0}\in \D(Q\x\R)$ and $\F\bu\K_{>0}[1]\in D_{\zeta\leq0}(Q\times\R)$.

\end{prop}

Next we introduce the notion of Tamarkin subset categories for constrained conditions on microsupports as follows:
\begin{defn}[Tamarkin Subset Category]\label{Tamarkinsub}
	Consider the de-homogenization map $\rho:T^*Q\x\dot{T}^*\mathbb{R}\rightarrow T^*Q$, where 
	$\dot{T}^*\mathbb{R}=\{(z,\zeta)\in T^*\mathbb{R} | \zeta\neq0  \}$ 
	and $\rho$ sends $(q,p,z,\zeta)$ to $(q,\frac{p}{\zeta})$. 
	\begin{enumerate}
		\item For $A \overset{\text{cls}}\subset T^*Q$, we define $\D_A(Q\x\R):=\{\F\in \D| SS(\F)\subset \rho^{-1}(A)\}$.
		\item	For $U\overset{\text{open}}\subset T^*Q$, we define $\D_U(Q\x\R):={\D_{T^*Q\setminus U}}^{\perp,l}$ w.r.t $Rhom$ in $\D$.
	\end{enumerate}
\end{defn}

Note that the Tamarkin category for open subset is defined in terms of the left semiorthogonal complement of its counterpart. This is due to the general fact that the microsupport of a sheaf is a closed subset.

\subsection{Sheaf quantization of Hamiltonian diffeomorphisms}

Let $I$ be an open interval of $\R$ containing the origin and let $h: I\x T^*Q\rightarrow T^*Q$ be a Hamiltonian diffeomorphism driven by the Hamiltonian function $H:T^*Q \rightarrow \R$. Recall the de-homogenization map $\rho:T^*Q\x\dot{T}^*\mathbb{R}\rightarrow T^*Q$ given by $\rho(q,p,z,\zeta)=(q,\frac{p}{\zeta})$. We could consider the homogenized Hamiltonian function defined on $T^*Q\x\dot{T}^*\R$:
\begin{equation}
\begin{split}
\tilde{H_s} &:=  \zeta H_s\circ\rho, \\
\tilde{H_s}(q,p,z,\zeta) &:=   \zeta  H_s(q,p/\zeta)
\end{split}
\end{equation}

In general this homogenized function $\tilde{H}$ could be undefined at $\zeta=0$. For the sheaf quantization technique to work one needs $\tilde{H}$ to generate a Hamiltonian flow that is furthermore defined for $p\neq0$ even when $\zeta=0$. This is treated in \cite{GKS12,GS14} by imposing the compact support condition:

\begin{prop}\cite[Proposition A.6(ii)]{GKS12}\cite[Proposition 3.2]{GS14}\label{GKS}
	Assume that the Hamiltonian diffeomorphism $h$ is compactly supported, then $h$ can be extended to a homogeneous Hamiltonian diffeomorphism $\tilde{h}: I\x \dot{T}^*(Q\x\R)\rightarrow \dot{T}^*(Q\x\R)$ such that the following diagram commutes:
\begin{center}
	\begin{tabular}{c}
		\xymatrix{
			{{I\x T_{\zeta>0}^*}(Q\x\R)}\ar[d]_{\rho\x id_I} \ar[rr]^{\tilde{h}}  &   & {{T_{\zeta>0}^*}(Q\x\R)}\ar[d]_{\rho}\\
			{I\x T^*Q }\ar[rr]^{h}&  & {T^*Q}.
		}
	\end{tabular}
\end{center}
	More precisely, there are smooth functions $u$ on $I\x T^*Q$ and $v$ on $I\x \pi_0(\dot{T}^*Q)$ such that for $\zeta>0$, one has
\begin{equation}
\tilde{h}_s((q,p;z,\zeta))=(h_s(q,p);z+u_s(q,\f{p}{\zeta}),\zeta)
\end{equation}
and for $\zeta=0$ one has
\begin{equation}
\tilde{h}_s((q,p;z,0))=(q,p;z+v_s([(q,p)]),0).
\end{equation}	
Note that when $(q,\f{p}{\zeta})\notin \text{supp}(h)$ one can take $u_s(q,\f{p}{\zeta})= v_s([q,\f{p}{\zeta}])$.
\end{prop}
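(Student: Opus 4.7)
The plan is to integrate the homogenized Hamiltonian $\tilde H_s = \zeta\cdot H_s\circ\rho$ explicitly on the half-space $\{\zeta>0\}\subset T^*(Q\x\R)$, verify that its flow intertwines with $h$ via $\rho$, and then extend the flow across the hypersurface $\{\zeta=0\}$ using the compact support hypothesis on $h$.

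The first step is a direct computation of Hamilton's equations for $\tilde H_s$ in coordinates $(q,p,z,\zeta)$. With $P := p/\zeta$ one immediately finds $\dot\zeta = 0$, while $\dot q$ and $\dot P$ reproduce the equations of the Hamiltonian flow of $H_s$ on $T^*Q$, and $\dot z = H_s(q,P) - P\cdot\partial_P H_s(q,P)$. This already proves the commutative diagram on $\{\zeta>0\}$. Integrating the last equation along the $T^*Q$-flow of $h$ yields the candidate
\begin{equation*}
u_s(q_0,P_0) := \int_0^s \bigl[H_r - P\cdot\partial_P H_r\bigr]\bigl(h_r(q_0,P_0)\bigr)\,dr,
\end{equation*}
which is smooth and compactly supported in the $T^*Q$ variables because $\text{supp}(H)$ is compact. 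This produces the stated explicit formula for $\tilde h_s$ when $\zeta>0$; the region $\zeta<0$ is treated identically after the sign change $P\mapsto p/\zeta$ remains a diffeomorphism away from $\zeta=0$.

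The main obstacle is the extension across $\{\zeta=0\}$, which is possible precisely because $h$ is compactly supported. Fix $(q,p)$ with $p\neq 0$: as $\zeta\to 0^+$ the point $(q,p/\zeta)$ escapes every compact set in $T^*Q$, so for $\zeta$ sufficiently small one has $h_r(q,p/\zeta) = (q,p/\zeta)$ for every $r\in[0,s]$. Consequently the $(q,p)$-components of $\tilde h_s$ stabilize to $(q,p)$, while $u_s(q,p/\zeta)$ stabilizes to a number that depends only on the positive ray $[q,p]\in\pi_0(\dot T^*Q)$. One defines $v_s([q,p])$ to be this limit and sets $\tilde h_s(q,p,z,0) := (q,p,z+v_s([q,p]),0)$. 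The technically delicate point, and the real content of the proposition, is verifying that the assembled $\tilde h_s$ is a smooth diffeomorphism of $\dot T^*(Q\x\R)$, homogeneous of degree one in $(p,\zeta)$. Degree-one homogeneity of $\tilde H_s$ reduces the smoothness check to a compact set of ray directions, on which the explicit formula is matched across $\zeta=0$ via the stabilization property above; without the compact-support hypothesis the integrand defining $u_s$ would not stabilize and no function $v_s$ would exist.
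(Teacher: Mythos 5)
Your approach---compute Hamilton's equations for $\tilde H_s=\zeta\,H_s\circ\rho$ on $\{\zeta>0\}$ to see that $(q,P)=(q,p/\zeta)$ flows by $H_s$ while $\dot\zeta=0$ and $\dot z=H_s-P\cdot\partial_P H_s$, integrate to get $u_s$, and then use compact support of $h$ to push the flow across $\{\zeta=0\}$---is exactly the argument behind the cited \cite[Prop.\ A.6]{GKS12} and \cite[Prop.\ 3.2]{GS14}; the paper itself does not reprove this proposition.

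The one place you are too quick is the sentence you devote to smoothness at $\{\zeta=0\}$, which is where the compact-support hypothesis actually does its work. What needs to be said is that $h_r=\mathrm{id}$ outside a compact $K\subset T^*Q$ forces $dH_r\equiv 0$ on $T^*Q\setminus K$, so $H_r$ is \emph{locally constant} there and $P\cdot\partial_P H_r\equiv 0$; hence in a conic neighbourhood of $\{\zeta=0,\,p\neq0\}$ the flow is literally $(q,p,z,\zeta)\mapsto(q,p,z+\text{(locally constant)},\zeta)$, which is manifestly smooth, and this locally constant value is precisely your $v_s$, a genuine function on $\pi_0(\dot T^*Q)$ rather than merely a ray-wise limit as you phrase it. Also a terminological slip: $\tilde h_s$ is $\R_{>0}$-equivariant (commutes with fibrewise dilations); ``homogeneous of degree one'' is the property of $\tilde H_s$, not of the diffeomorphism it generates.
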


Let $M=Q\x\R$ and denote $T^*M=\{(x,\xi)\}$. Let $\tilde{h}:I\x \dot{T}^*M\rightarrow \dot{T}^*M$ be the homogeneous Hamiltonian diffeomorphism obtained in Proposition \ref{GKS}. Let $\tilde{H}$ be the homogeneous Hamiltonian function generating the Hamiltonian diffeomorphism $\tilde{h}$. Consider a conic Lagrangian submanifold $\Lambda\subset T^*I\x\dot{T}^*M\x\dot{T}^*M$ defined by
\begin{equation}\label{Lagsuspension}
\Lambda=\{((s,-\tilde{H}_s(\tilde{h}_s(x,\xi));\tilde{h}_s(x,\xi);(x,-\xi))| s\in I    ,(x,\xi)\in \dot{T}^*M )    \}.
\end{equation}

The submanifold $\Lambda$ is called the \textit{Lagrangian suspension} of the Hamiltonian diffeomorphism $\tilde{h}$ . More precisely, let $\Lambda':=\{(s;\tilde{h}_s(x,\xi);(x,\xi))\}\subset I\x \dot{T}^*M\x\dot{T}^*M$ be the total graph of $\tilde{h}$ and let $\pi_I:T^*I\rightarrow I$ be the projection, we have:

\begin{prop}\cite[Lemma A.2]{GKS12}
\begin{enumerate}
	\item $\Lambda$ is the unique conic Lagrangian such that $(\pi_I\x\emph{id}_{\dot{T}^*M\x\dot{T}^*M})|_\Lambda\cong \Lambda' $.
	\item For arbitrary $s\in I$, the inclusion $i_s:M\x M\hookrightarrow I\x M\x M$ is non-characteristic and the graph of $\tilde{h}_s$ at $s$ is equal to the symplectic reduction $\Lambda\circ T^*_s I$ of $\Lambda$ at $ T^*_s I$.
\end{enumerate}
\end{prop}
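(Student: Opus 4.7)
The plan is to establish first that the map
\begin{equation}
\Psi: \dot{T}^*M \x I \to \dot{T}^*M \x \dot{T}^*M \x T^*I, \quad (x,\xi,s)\mapsto\bigl(\tilde{h}_s(x,\xi);\,(x,-\xi);\,(s,-\tilde{H}_s(\tilde{h}_s(x,\xi)))\bigr)
\end{equation}
parametrizes a conic Lagrangian $\Lambda$, and then to derive (1) by a uniqueness argument and (2) by a direct reduction computation. The dimension $2\dim M+1$ of $\Lambda=\Psi(\dot T^*M\x I)$ already equals half of the ambient dimension, so only the vanishing of the restricted symplectic form remains to be checked.

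The Lagrangian property hinges on the classical fact that a Hamiltonian homogeneous of degree one preserves the Liouville 1-form: Euler's identity gives $\iota_{X_{\tilde H_s}}\lambda=\tilde H_s$, and combined with Cartan's formula this yields $\mathcal{L}_{X_{\tilde H_s}}\lambda=0$, so $\tilde h_s^*\lambda=\lambda$ for every $s\in I$. A direct pullback computation then shows
\begin{equation}
\Psi^*(\lambda_1+\lambda_2+\sigma\,ds)\;=\;\xi\,dx + \tilde H_s(\tilde h_s(x,\xi))\,ds\;-\;\xi\,dx\;-\;\tilde H_s(\tilde h_s(x,\xi))\,ds\;=\;0,
\end{equation}
where the $ds$-contribution in the first summand comes from $\lambda_1(X_{\tilde H_s})=\tilde H_s$ and is exactly cancelled by the $T^*I$-fiber coordinate $\sigma=-\tilde H_s\circ\tilde h_s$ built into $\Lambda$. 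Taking $d$ yields $\Psi^*(\omega_1+\omega_2+\omega_I)=0$, so $\Lambda$ is Lagrangian; conicity is immediate from the homogeneity of $\tilde h_s$ and the degree-one homogeneity of $\tilde H_s$ under the diagonal $\R_{>0}$-action on the cotangent fibers.

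For uniqueness in (1), let $\Lambda''$ be any other conic Lagrangian with $(\mathrm{id}\x\pi_I)|_{\Lambda''}\backsimeq\Lambda'$. A dimension count forces $\Lambda''$ to be locally the graph of a function $\phi(x,\xi,s)$ in the $T^*_sI$-fiber direction over $\Lambda'$; the Lagrangian condition then requires $d\phi$ to match the exact one-form produced above, so $\phi+\tilde H_s(\tilde h_s(x,\xi))$ is locally constant, and conicity together with degree-one homogeneity in $\xi$ forces this constant to vanish, giving $\Lambda''=\Lambda$. For (2), the conormal bundle of $\{s\}\x M\x M$ inside $M\x M\x I$ is precisely the fiber $T^*_sI$; since the first two cotangent components of $\Lambda$, namely $\tilde h_s(x,\xi)$ and $(x,-\xi)$, are nonzero on $\dot T^*M$, the non-characteristic condition asserting that $\Lambda$ meets $T^*_{M\x M\x\{s\}}(M\x M\x I)$ only along the zero section holds. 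The symplectic reduction $\Lambda\circ T^*_sI$ then consists of the first two cotangent components of $\Lambda$ at the slice $s=\mathrm{const}$, which from the parametrization $\Psi$ equals $\{(\tilde h_s(x,\xi);(x,-\xi)):(x,\xi)\in\dot T^*M\}$, the twisted graph of $\tilde h_s$.

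The main technical obstacle is the pullback identity for $\Psi^*(\lambda_1+\lambda_2+\sigma\,ds)$, and in particular the application of Euler's identity to handle the $ds$-terms; once it is in place, both parts of the proposition follow from formal dimension counting and standard symplectic reduction arguments.
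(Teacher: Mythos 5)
The paper does not prove this statement; it is cited directly as Lemma A.2 of Guillermou--Kashiwara--Schapira. Your proof is a correct reconstruction of the argument and hits the essential points, so I will comment on accuracy rather than compare with the paper.

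Your treatment of the Lagrangian condition is sound: the key identity $\iota_{X_{\tilde H_s}}\lambda=\tilde H_s$ is Euler's relation for a fiberwise degree-one function, Cartan's formula then gives $\mathcal{L}_{X_{\tilde H_s}}\lambda=0$ hence $\tilde h_s^*\lambda=\lambda$, and the explicit pullback
\begin{equation}
\Psi^*\lambda_1=\xi\,dx+\tilde H_s(\tilde h_s(x,\xi))\,ds,\qquad \Psi^*\lambda_2=-\xi\,dx,\qquad \Psi^*(\sigma\,ds)=-\tilde H_s(\tilde h_s(x,\xi))\,ds
\end{equation}
does cancel. Together with the dimension count $2\dim M+1$ and the $\R_{>0}$-equivariance of $\tilde h_s$ (plus degree-one homogeneity of $\tilde H_s$), this establishes that $\Lambda$ is a conic Lagrangian.

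One small imprecision: in the uniqueness argument you say $\phi+\tilde H_s(\tilde h_s(x,\xi))$ must be ``locally constant.'' The closedness of $(\phi+\tilde H_s\circ\tilde h_s)\,ds$ only kills the $x$- and $\xi$-derivatives (there is no $ds\wedge ds$ term), so what it forces is that $\phi+\tilde H_s\circ\tilde h_s=g(s)$ for some function of $s$ alone. The conicity argument you then invoke, $g(s)=cg(s)$ for all $c>0$, still kills $g$, so the conclusion $\Lambda''=\Lambda$ stands; just replace ``locally constant'' by ``a function of $s$ alone.'' Likewise, in (2) the conormal bundle $T^*_{M\x M\x\{s\}}(M\x M\x I)$ is the zero section of the first two factors times $T^*_sI$, not $T^*_sI$ alone, but your reason for non-characteristicity --- that $\Lambda$ sits in $\dot T^*M\x\dot T^*M\x T^*I$ and therefore never meets that set --- is exactly right, and the reduction $\Lambda\circ T^*_sI$ is the twisted graph of $\tilde h_s$ as you state.
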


A remarkable achievement of \cite{GKS12} is the following theorem due to Guillermou, Kashiwara and Schapira:

\begin{thm}\cite[Theorem 3.7]{GKS12}\label{quantization}		
	Associated with the Hamiltonian diffeomorphism $\tilde{h}$ there exists a (unique) sheaf $\mathcal{K}^{\tilde{h}} \in D(I\x M\x M)$ satisfying
\begin{enumerate}
	\item  $SS(\mathcal{K}^{\tilde{h}})= \Lambda$ outside the zero section.
	\item  $\mathcal{K}^{{\tilde{h}}}|_{s=0}\cong \K_{\Delta_{M}}:=\K_{\Delta_{M}}:=\K_{\{(q,q,z,z)\}}$.
\end{enumerate}
\end{thm}

In this paper we prefer to work with the Tamarkin category $\D(Q\x\R)$ and we prefer a convolution kernel in $\D(I\x Q\x Q\x\R)$ rather than a kernel in $D(I\x M\x M)=D(I\x Q\x\R\x Q\x\R)$. Again let $h: I\x T^*Q\rightarrow T^*Q$ be a (non-homogeneous) Hamiltonian diffeomorphism. Let 
$$ 
I\x Q_1\x Q_2\x\R \xleftarrow{\pi_{IQ1Q2R1}}   I\x Q_1\x\R_1\x Q_2 \x\R_2  \xrightarrow{ \pi_{R2}  }  \R
$$ be projections. We set 
\begin{equation}
\s:=  R{\pi_{IQ1Q2R1}}_!( \mathcal{K}^{\tilde{h}}\otimes {\pi_{R2}}^{-1}(\K_{\geq0}))\in \D(I\x Q\x Q\x\R).
\end{equation}

\begin{rmk}\label{quantizationrmk}
The kernel $\s$ is called the sheaf quantization of $h$ for the following reasons:
\begin{enumerate}
	\item Each $\s_s:=\s|_{\{s\}\x Q\x Q\x\R}$ defines an autoequivalence $(-)\bu\s_s$ on  $\D(Q\x\R)$.
	\item Moreover $(-)\bu\s_s$ induces a functor $\D_A(Q\x\R)\rightarrow \D_{\tilde{h}_s(A)}(Q\x\R)$.
	\item Family of paths of Hamiltonian diffeomorphisms with the common endpoint $\tilde{h}_1$ induce the same $\s|_{s=1}$. 
	
\end{enumerate}
\end{rmk}

For an explanation for Remark \ref{quantizationrmk}(3), we refer the reader to \cite[Proposition 4.18]{Zhang20}.

\subsection{Interleaving distance on $\D(Q\x\R)$}

The behaviour of the convolution product relies heavily on the additive structure of the extra variable $\R$. Let $T_c : z\mapsto z+c$ be the translation by $c$ units along $\R$. By abuse of notation we also denote by $T_c$ its action on $D(Q\times\mathbb{R})$. A noteworthy feature of the Tamarkin category $\D(Q\x\R)$ is we have for $c\geq0$, a natural transformation
\begin{equation}
\tau_c : \text{Id}\Rightarrow T_c
\end{equation}
between the endofunctors of $\D(Q\x\R)$. Indeed, let $\K_{\geq c}\in D(\R)$ be the constant sheaf supported by the subset $\{z\geq c\}$, then the closed inclusion $\{z\geq c\}\hookrightarrow\{z\geq0\}$ induces a morphism $\K_{\geq 0}\rightarrow \K_{\geq c}$ in $D(\R)$. It is easy to see that convoluting objects of $\D(Q\x\R)$ with $ \K_{\geq c}$ is equivalent to the convolution with $\K_{c}$, in which the latter gives rise to the action $T_c$ on the Tamarkin category $\D(Q\x\R)$. Therefore as kernels of convolution the map $(-)\bu\K_{\geq 0}\rightarrow (-)\bu\K_{\geq c}$ induces the above natural transformation $\tau_c$.

Let $\F,\GG$ be objects of the Tamarkin category $\D(Q\x\R)$ and let $a,b\geq0$. Following \cite{AI20}, we say the pair $(\F,\GG)$ is $(a,b)$-\emph{interleaved} if there exists morphisms $\F \overset{\alpha}{\underset{\delta}{\rightrightarrows}}T_a\GG$ and $\GG \overset{\beta}{\underset{\gamma}{\rightrightarrows}} T_b\F$ satisfying the following equations in morphisms of $\D(Q\x\R)$:
\begin{equation}\label{interleaving}
\begin{cases}
[\F\xrightarrow{\alpha}T_a\GG\xrightarrow{T_a\beta} T_{a+b}\F ]=\tau_{a+b}(\F),\\
[\GG\xrightarrow{\gamma}T_b\F\xrightarrow{T_b\delta} T_{a+b}\GG ]=\tau_{a+b}(\GG).
\end{cases}
\end{equation}

Note that $\tau_c(\F)\in \hom(\F,T_c\F)=H^0 Rhom(\F,T_c\F)$. In general it could happen that a derived morphism in $Rhom$ is not cohomologous to zero while holding zero cohomology in $\hom$. Therefore being interleaving is indeed a nontrivial condition which measures the mutual persistency between two sheaves.

The brilliant idea of distance for sheaves was raised by Kashiwara and Schapira in \cite{KS18}, based on the convolution product in the derived setting. For its motivation and historical remarks we refer the reader to \cite{KS18} and the references therein. Inspired by \cite{KS18}, Asano and Ike \cite{AI20} established a persistence-like distance function on the Tamarkin category with the notion of (non-balanced) sheaf-theoretic interleaving (\ref{interleaving}):
\begin{defn}
	For $\F,\GG\in \D(Q\x\R)$ their interleaving distance $d(\F,\GG)$ is defined as follows
	\begin{equation}
	d(\F,\GG):= \inf\{a+b \,|\,(\F,\GG) \text{\, is \,} (a,b)\text{-interleaved} \}.
	\end{equation}
\end{defn}

The following important theorem, due to Asano and Ike, demonstrates the stability property of the interleaving distance under Hamiltonian diffeomorphisms with respect to the mean oscillation norm. 

\begin{thm}\cite[Theorem 4.16]{AI20}\label{bound}
Let $H:I\x T^*Q \rightarrow\R $ be a compactly supported Hamiltonian function and denote by $h$ the Hamiltonian diffeomorphism generated by $H$. Let $\s\in \D(I\x Q\x Q\x\R)$ be the sheaf quantization associated with $h$. Let $\F\in\D(Q\x\R)$ and set $\F_s:=\F\bu\s_s \in\D(Q\x\R)$ for $s\in I$. Then $d(\F_0,\F_1)\leq \|H \| = \int_{0}^{1} (\max H_s - \min H_s) ds  $.
\end{thm}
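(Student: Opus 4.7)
The plan is to exploit a time-dependent renormalization of $H$ by its extremal values in order to produce two auxiliary Hamiltonians of definite sign, and to combine the resulting morphisms into the sought interleaving. Write $\mu(s) := \max_{T^*Q} H_s$ and $\nu(s) := \min_{T^*Q} H_s$. Since $H$ is compactly supported, $H_s$ vanishes outside a compact subset of $T^*Q$, so $\nu(s) \leq 0 \leq \mu(s)$ for every $s \in I$; hence $a := -\int_0^1 \nu(s)\,ds$ and $b := \int_0^1 \mu(s)\,ds$ are nonnegative and $a + b = \|H\|$. Introduce $H^+_s := H_s - \nu(s) \geq 0$ and $H^-_s := H_s - \mu(s) \leq 0$; both induce the same Hamiltonian flow on $T^*Q$, but their homogenized lifts pick up additional $z$-shifts $\int_0^s \nu(t)\,dt$ and $\int_0^s \mu(t)\,dt$, respectively. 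At the level of the sheaf quantizations $\s^{\pm}$ this translates into
\begin{equation*}
\s^+_s \circ \F \simeq T_{\int_0^s \nu(t)\,dt}\F_s, \qquad \s^-_s \circ \F \simeq T_{\int_0^s \mu(t)\,dt}\F_s.
\end{equation*}

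The technical core is what one may call a \emph{sign lemma}: for a compactly supported Hamiltonian $K \geq 0$ (resp.\ $K \leq 0$) with sheaf quantization $\mathcal{T}\in D(I\x M\x M)$, there is a canonical morphism of kernels $\mathcal{T}_s \to \K_{\Delta_M}$ (resp.\ $\K_{\Delta_M} \to \mathcal{T}_s$) for each $s \in I$, compatible with the identification $\mathcal{T}|_{s=0} \cong \K_{\Delta_M}$. The heuristic origin is the microsupport estimate on the Lagrangian suspension: for $K \geq 0$ one has $\sigma = -\tilde{K}_s(\tilde{h}_s(x,\xi)) \leq 0$ on $\{\zeta > 0\}$, and a non-characteristic deformation argument along $I$ then assembles the slices $\{\mathcal{T}_s\}_{s \in I}$ into a family equipped with canonical maps $\mathcal{T}_s \to \mathcal{T}_0 = \K_{\Delta_M}$; reversing the sign of $K$ flips the direction of the arrow.

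Applying the sign lemma to $H^-$ and to $H^+$, and translating by the appropriate amounts using the identifications above, produces at $s = 1$ the pair of morphisms
\begin{equation*}
\alpha \colon \F_0 \longrightarrow T_b \F_1, \qquad \beta \colon \F_1 \longrightarrow T_a \F_0,
\end{equation*}
which is the candidate $(b,a)$-interleaving with total $a + b = \|H\|$. It remains to verify the identities $T_b\beta\circ\alpha = \tau_{a+b}(\F_0)$ and $T_a\alpha\circ\beta = \tau_{a+b}(\F_1)$ in $\D(Q\x\R)$ demanded by \eqref{interleaving}, whence $d(\F_0,\F_1)\leq a+b=\|H\|$.

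The main obstacle is twofold. First, the sign lemma itself: extracting a \emph{canonical} natural transformation from a mere microsupport bound, rather than an ad hoc family of stalkwise isomorphisms, requires a careful propagation argument for $\s$ regarded as a sheaf on $I \x M \x M$, together with a verification of independence from auxiliary choices in the non-characteristic deformation. Second, the compatibility of the round-trip composition with $\tau_{a+b}$ is delicate: it says that the concatenation of the flows of $H^+$ and $(H^-)^{-1}$, whose combined homogeneous Hamiltonian $\zeta(\mu(s)-\nu(s))$ depends only on $s$, has as its sheaf quantization precisely the pure translation kernel $T_{a+b}\K_{\Delta_M}$ realizing $\tau_{a+b}$. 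It is here that the compact support of $H$ plays its essential normalization role, ensuring both that $\nu \leq 0 \leq \mu$ holds pointwise in $s$ and that the quantization procedure of Theorem \ref{quantization} is applicable to $H^\pm$.
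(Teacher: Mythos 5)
The paper provides no proof of this statement; it is quoted directly from \cite[Theorem~4.16]{AI20}, so your reconstruction can only be measured against the argument in that reference. Your outline does in fact reproduce the Asano--Ike strategy: set $\mu(s)=\max H_s$, $\nu(s)=\min H_s$, note that compact support forces $\nu\leq 0\leq\mu$, pass to the auxiliary Hamiltonians $H^+=H-\nu\geq 0$ and $H^-=H-\mu\leq 0$ (which generate the same flow on $T^*Q$, the extra $\zeta$-linear term producing a pure $z$-translation at the homogenized level), invoke a ``sign lemma'' giving a canonical morphism between the kernel slice $\mathcal{T}_s$ and $\K_{\Delta_M}$ in a direction dictated by the sign of the Hamiltonian, and splice the two resulting morphisms into the candidate interleaving. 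That is the architecture of the proof in \cite{AI20}.

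What you have written, however, is the skeleton and not the proof. The ``sign lemma'' is the entire technical content of the theorem, and ``a non-characteristic deformation argument along $I$'' is a slogan, not a construction: you need to produce an actual morphism in the derived category, not a pointwise family of identifications of stalks. In \cite{AI20} this is done by applying the adjunction unit/counit for the inclusion of a half-line $\{s\geq 0\}$ (resp.\ $\{s\leq 1\}$) into $I$ to the kernel $\s$, and then using the microsupport estimate $SS(\s)\cap\{\zeta>0\}\subset\{\sigma=-\tilde{H}_s\}$ together with the non-characteristic deformation lemma of Kashiwara--Schapira to identify the relevant direct image with the slice at the endpoint $\s|_{s=0}\simeq\K_{\Delta_M}$; the compatibility of the round-trip composite with $\tau_{a+b}$ then follows from the fact, which you correctly isolate but do not argue, that the quantization of a Hamiltonian of the form $\zeta c(s)$ is precisely the translation kernel realizing $T_{\int_0^1 c}$. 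You gesture at both steps but supply neither, and both require real work. Finally, beware a probable sign slip: with $\tilde{H}^{\pm}_s=\tilde{H}_s-\zeta\nu(s)$ (resp.\ $\tilde{H}_s-\zeta\mu(s)$) the $z$-shift accumulated at time $s$ is $-\int_0^s\nu$ (resp.\ $-\int_0^s\mu$), not $+\int_0^s\nu$ as you wrote; this interacts with the direction in which your sign lemma sends the arrow, and in your version the two sign choices appear to cancel so that the final bound comes out right, but you should orient both carefully rather than rely on the cancellation.
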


\section{Categorical energy relative to microlocal projector}

\subsection{Microlocal projector associated to open set} 

\begin{defn}[Admissible open sets and their  projectors]\label{admissible}
Let $B$ be an open subset of $T^*Q$. We say $B$ is admissible if there is an exact triangle 
\begin{equation}
\PB\rightarrow\K_{\Delta}\rightarrow \QB \xrightarrow{+1}
\end{equation}
in $\D(Q\x Q\x\R)$
such that the convolution operation with the above triangle gives rise to the semiorthogonal decomposition with respect to the triple of subset categories (see Definition \ref{Tamarkinsub}) 
$$
	(\D_{B}(Q\x\R),\D(Q\x\R),\D_{T^*Q\setminus B}(Q\x\R)).
$$
This means that the embedding $\D_{B}(Q\x\R)\hookrightarrow \D(Q\x\R)$ admits a right adjoint functor $(-)\bu\PB$, and the embedding $\D_{T^*Q\setminus B}(Q\x\R)\hookrightarrow \D(Q\x\R)$ admits a left adjoint functor $(-)\bu\QB$.

We call $\PB$ the microlocal projector associated with $B\subset T^*Q$. Sometimes the pair ($\PB$,$\QB$) is called microlocal kernel pair associated with $B$.

\end{defn}

\begin{thm}\label{projector} If the open set $B\subset T^*Q$ is bounded, then it is admissible.
\end{thm}

The proof of Theorem \ref{projector} is a straightforward \textit{verbatim} repetition of the one in \cite[Theorem 3.11]{Chiu17} and will be omitted. 

Here we make a remark on the connection of microlocal projector and sheaf quantization. In view of the smooth Urysohn's Lemma for the close subset $T^*Q\setminus B$, there exists a smooth function $0\leq F \leq 1$ on $T^*Q$ such that $T^*Q\setminus B=F^{-1}(1)$. This way $B$ coincides to the sublevel set $\{F<1\}$. The boundedness of $B$ guarantees the Hamiltonian $I$-action generated by the autonomous Hamltonian function $F$, where $I=\R$. By Theorem \ref{quantization} and Remark \ref{quantizationrmk}, there exists an associated sheaf quantization $\s\in\D(I\x Q\x Q\x\R)$. It turns out that $\PB$ can be constructed from $\s$. For the details of the construction of the projector we refer the reader to \cite{Chiu17,Zhang21,Zhang20}. 

We would also like to remark that the projector $\PB$ only depends on the open subset $B\subset T^*Q$ and does not depend on the choice of $F$. A different choice of the function $F$ leads to a  reparametrization of the flow (that is, the Hamiltonian $I$-action) and gives isomorphic $\PB$.

\subsection{Functor right adjoint to microlocal projector}

In the present paper we would like to emphasize that behind the proof of \cite[Theorem 3.11]{Chiu17} there is a functor $\EB:\D(Q\x\R)\rightarrow\D(Q\x\R)$ which is the right adjoint functor of $(-)\bu\PB$ with respect to $R\hom$. That is, for any objects $\F$ and $\GG$ of $\D(Q\x\R)$ the following adjoint relation holds:
$$
R\hom(\GG\underset{Q}{\bu}\PB,\F)=R\hom(\GG,\EB(\F)).
$$

To be more precise, with Grothendieck's six functor formalism one defines for any object $\F\in\D(Q\x\R)$, the object
\begin{equation}
\EB(\F)=R{\pi_{Q1R1}}_* \underline{Rhom}({\pi_{Q12R2}}^{-1}(\PB),{\pi_{Q2\sigma}}^! (\F)).
\end{equation}

Here the maps $\pi_{Q1R1}:Q_1\x Q_2\x\R_1\x\R_2\rightarrow Q_1\x\R_1$ and $\pi_{Q12R2}:Q_1\x Q_2\x\R_1\x\R_2\rightarrow Q_1\x Q_2\x\R_2$ are projections. The map $\pi_{Q2\sigma}:Q_1\x Q_2\x\R_1\x\R_2\rightarrow Q_2\x\R$ projects along $Q_1$ and takes the summation $\R_1\x\R_2\rightarrow\R$.

The key feature of $\EB$ is the following microlocal vanishing property outside the open sublevel set $B$, which plays an important role in the proof of Theorem \ref{projector} (as in \cite[Theorem 3.11]{Chiu17}) that the convolution with $\PB$ projects to the left semiorthogonal complement of $\D_{T^*Q\setminus B} (Q\x\R)$:

\begin{prop}\label{vanishing} 
	Let $B$ be an admissible open subset of $T^*Q$. Then for all $\F\in\D_{T^*Q\setminus B} (Q\x\R)$, one has $\EB(\F)\cong 0$.
\end{prop}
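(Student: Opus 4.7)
The plan is to prove the vanishing by a direct microlocal analysis of the internal $\underline{Rhom}$ appearing in the defining formula of $\EB(\F)$. Concretely, I will show that the complex
$$
\mathcal{H}:=j^!\,\underline{Rhom}\bigl(\pi_{IQ12R2}^{-1}(\widehat{\s}),\,\pi_\sigma^!(\F)\bigr)
$$
is already acyclic on $(-\infty,1)\x Q_1\x Q_2\x\R_1\x\R_2$. Once this is established, the successive applications of $R\pi_{IQ1R1*}$, $Rj_{1*}$, and $R\pi_{Q1R1*}$ propagate the zero upward through the defining formula and yield $\EB(\F)\cong 0$.

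The crucial geometric input is that $F\leq 1$ attains the value $1$ precisely on $T^*Q\setminus B$, so the interior of $T^*Q\setminus B$ is contained in $\{dF=0\}$ and hence in the fixed locus of the Hamiltonian flow of $F$. Under the Fourier-Sato-Tamarkin transform $\widehat{\s}=\s\,\underset{s}{\bu}\,\K_{\{z+st\geq 0\}}[1]$, convolution with $\K_{\{z+st\geq 0\}}$ implements a Legendre transform in the $(s,z)$-variables, exchanging the homogenized Hamiltonian value $\tilde H=\zeta F\circ\rho$ with the dual variable $\zeta t$. Accordingly, the part of $SS(\widehat{\s})$ sitting over $\rho^{-1}(T^*Q\setminus B)$ is concentrated on $\{t=1\}$, while the part sitting over $\rho^{-1}(B)$ lies in $\{t<1\}$. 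Restricting to the open half-space $\{t<1\}$ therefore forces the $(Q_2,\R_2)$-directed microsupport of $\pi_{IQ12R2}^{-1}(\widehat{\s})$ to lie inside $\rho^{-1}(B)$, outside the zero section.

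On the other hand, $\pi_\sigma^!(\F)$ inherits the condition $SS(\F)\subset\rho^{-1}(T^*Q\setminus B)$ in its $(Q_2,\R_2)$-directions. Thus on the open locus $\{t<1\}$ the antipodal microsupport of $\pi_{IQ12R2}^{-1}(\widehat{\s})$ and the microsupport of $\pi_\sigma^!(\F)$ intersect only along the zero section. The standard Kashiwara-Schapira non-characteristic vanishing criterion for $\underline{Rhom}$ then forces $\mathcal{H}\cong 0$, which is the reduction we wanted.

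The main obstacle will be the rigorous microsupport estimate for $\widehat{\s}$ in a neighbourhood of the truncation wall $\{t=1\}$, together with the handling of the $\zeta=0$ stratum where the Legendre correspondence degenerates and the homogenization of $h$ is only well defined after the compact-support modification of Proposition \ref{GKS}. I would derive these estimates exactly as in the microsupport analysis for Fourier-Sato-Tamarkin kernels developed in \cite{Chiu17} (see also \cite{Zhang21,Zhang20}), using the explicit formula for $\tilde h_s$ from \cite{GKS12,GS14} to pin down the $s$-momentum on $SS(\s)$ as $-\tilde H_s$ and reading off the $t$-coordinate of $SS(\widehat{\s})$ after convolution. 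With these sharp microsupport bounds in hand, the non-characteristic disjointness step above is purely formal.
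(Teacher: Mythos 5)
The paper gives no proof of this proposition: it is quoted as an input to Theorem~\ref{projector}, whose proof is explicitly declared to be a \emph{verbatim} repetition of \cite[Theorem 3.11]{Chiu17} and is omitted. So your proposal has to be judged on its own merits rather than against a written argument.

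Your geometric preparation is sound: the interior of $T^*Q\setminus B$ lies in $\{dF=0\}$ and is fixed by the flow, the Lagrangian suspension (\ref{Lagsuspension}) carries $s$-momentum $-\tilde H = -\zeta F$, and the Fourier--Sato--Tamarkin convolution with $\K_{\{z+st\geq0\}}$ converts this $s$-momentum into the base coordinate $t$, so that (in de-homogenized terms) $SS(\widehat{\s})$ sits over $\{t=F(q_2,p_2)\}$; restricting to $\{t<1\}$ therefore confines the $(Q_2,\R_2)$-part of $SS$ to $\rho^{-1}(B)$, disjoint from $\rho^{-1}(T^*Q\setminus B)$ which carries $SS(\pi_\sigma^!\F)$. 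All of that is a correct reading of the construction in \cite{Chiu17}.

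The gap is in the concluding step. There is no ``Kashiwara--Schapira non-characteristic vanishing criterion for $\underline{Rhom}$.'' What \cite[Prop.~5.4.14]{KS90} gives under the non-characteristic hypothesis $SS(\mathcal{A})\cap SS(\mathcal{B})\subset T^*_XX$ is only the \emph{microsupport bound} $SS(\underline{Rhom}(\mathcal{A},\mathcal{B}))\subset SS(\mathcal{B})\,\widehat{+}\,SS(\mathcal{A})^a$; it never forces the complex to vanish. In fact your intermediate claim that $\mathcal{H}=j^!\underline{Rhom}(\pi_{IQ12R2}^{-1}\widehat{\s},\pi_\sigma^!\F)\cong 0$ is almost certainly false: $\underline{Rhom}(\mathcal{A},\mathcal{B})$ is supported on $\mathrm{supp}(\mathcal{A})$, and $\widehat{\s}$ has nonempty support over all of $\{t<1\}\times Q_1\times Q_2$, so $\mathcal{H}$ is generically a nonzero (locally constant-looking) complex. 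Disjointness of microsupports away from the zero section controls codirections, not the zero section itself, and the zero section over the support of $\mathcal{H}$ is unavoidable.

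What you actually need is a two-part argument that your sketch omits entirely. First, use the non-characteristic bound you set up to estimate $SS$ of the \emph{pushforward} $R\pi_{IQ1R1*}\mathcal{H}$: because every non-zero-section covector of $\mathcal{H}$ has nonzero $(Q_2,\R_2)$-momentum (the disjointness of $\rho^{-1}(B)$ and $\rho^{-1}(T^*Q\setminus B)$), the proper-pushforward microsupport estimate forces $SS$ of the result into the zero section in the $\R_1$-direction, i.e.\ into $\{\zeta\leq 0\}$ after the remaining pushforwards. Second — and this is the missing ingredient — invoke the rigidity of the Tamarkin category: since $\EB(\F)\in\D(Q\times\R)=D_{\zeta\leq0}^{\perp,l}$, any object of $\D$ that also lies in $D_{\zeta\leq0}$ has $R\hom$ with itself equal to zero and therefore vanishes. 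Without this Tamarkin-side step, knowing that $SS$ degenerates to the zero section tells you only that $\EB(\F)$ is locally constant in $z$, not that it is zero. You also still owe the properness hypotheses needed to apply the pushforward estimate; this is where the boundedness of $B$ and the compact support of the Hamiltonian enter and cannot be waved away as ``purely formal.''
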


\begin{cor}
	Let $B$ be an admissible open subset of $T^*Q$. Then the full subcategory $\D_{B}$ also serves as the right semiorthogonal complement of its right semiorthogonal complement $\D_{T^*Q\setminus B}$.

\begin{center}
	\begin{tabular}{c}
				\xymatrix{\ar @{} [drr] |{\bigcup}   
			& {\D} \ar@/_/[dl]_{\bu\PB}  \ar@/^/[dr]^{\EB} &  \\
			{{\D_{T^*Q\setminus B}}^{\perp,l}}  &  {\D_{B}} \ar@2{-}[l] \ar@2{-}[r] &{ {\D_{T^*Q\setminus B}}^{\perp,r}}}
		
	\end{tabular}
\end{center}

\end{cor}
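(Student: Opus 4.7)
The plan is to verify the equality $\D_B = (\D_{T^*Q\setminus B})^{\perp,r}$, complementing the definitional identity $\D_B = (\D_{T^*Q\setminus B})^{\perp,l}$, via two inclusions.

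The main content is the forward inclusion $\D_B \subseteq (\D_{T^*Q\setminus B})^{\perp,r}$. I would establish this by first producing the intermediate identification $\EB(\F) \cong \F$ for every $\F \in \D_B$. Granting that, for any $\GG \in \D_{T^*Q\setminus B}$ the chain
$$
Rhom(\GG, \F) \;\cong\; Rhom(\GG, \EB(\F)) \;\cong\; Rhom(\GG\bu\PB, \F) \;\cong\; Rhom(0, \F) \;\cong\; 0
$$
closes the argument: the first isomorphism uses the intermediate identification, the second is the adjunction $(-)\bu\PB \dashv \EB$, and the third uses $\GG \bu \PB \cong 0$, obtained by applying the semiorthogonal triangle of Theorem \ref{projector} to $\GG \in \D_{T^*Q\setminus B}$. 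To produce $\EB(\F) \cong \F$, I apply $\EB$ to the triangle $X\bu\PB \to X \to X\bu\QB \xrightarrow{+1}$ and invoke Proposition \ref{vanishing} on the right-hand term $X\bu\QB \in \D_{T^*Q\setminus B}$ to extract a natural isomorphism $\EB(X\bu\PB) \xrightarrow{\sim} \EB(X)$. Setting $X = \F \in \D_B$ and pairing the unit $\eta_\F \colon \F \to \EB(\F)$ with the Yoneda-style comparison $Rhom(Y, \EB(\F)) \cong Rhom(Y\bu\PB, \F) \cong Rhom(Y,\F)$ valid for $Y \in \D_B$, one deduces that $\eta_\F$ is an isomorphism, provided one first verifies $\EB(\F) \in \D_B$.

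The reverse inclusion $(\D_{T^*Q\setminus B})^{\perp,r} \subseteq \D_B$ then follows formally. Take $\F$ in the right orthogonal; the semiorthogonal triangle $\F\bu\PB \to \F \to \F\bu\QB \xrightarrow{+1}$ places $\F\bu\QB$ in $\D_{T^*Q\setminus B}$. Applying $Rhom(\F\bu\QB, -)$, the term $Rhom(\F\bu\QB, \F)$ vanishes by the right-orthogonality hypothesis and $Rhom(\F\bu\QB, \F\bu\PB)$ vanishes by the forward inclusion just proved (applied to $\F\bu\PB \in \D_B$). These jointly force $Rhom(\F\bu\QB, \F\bu\QB) \cong 0$, so $\F\bu\QB \cong 0$ and $\F \cong \F\bu\PB \in \D_B$.

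The hardest step is the microsupport check that $\EB(\F) \in \D_B$ whenever $\F \in \D_B$, which is what closes the Yoneda argument in the forward inclusion. I would attack it by a direct trace through the six-functor diagram (\ref{diagram}) defining $\EB$, tracking how the conic microsupport bound on $\widehat{\s}$ propagates through the $\underline{Rhom}$, the open restriction $j^!$, and the successive proper pushforwards, in parallel with the proof of Proposition \ref{vanishing}; the expected outcome is that the output $\EB(\F)$ has microsupport contained in $\rho^{-1}(B)$, placing it in $\D_B$.
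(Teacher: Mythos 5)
The paper states this corollary without giving a proof, so there is no paper argument to compare against line by line; I will assess your proposal on its own terms.

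Your overall logical scaffolding is sound, and you have correctly located the load-bearing step. The reverse inclusion $(\D_{T^*Q\setminus B})^{\perp,r}\subseteq\D_B$ is indeed purely formal once the forward one is known: for $\F$ in the right orthogonal, applying $Rhom(\F\bu\QB,-)$ to the decomposition triangle kills the first two terms (the first by the forward inclusion, the second by hypothesis), forcing $Rhom(\F\bu\QB,\F\bu\QB)\cong 0$ and hence $\F\bu\QB\cong 0$. Likewise, the chain $Rhom(\GG,\F)\cong Rhom(\GG,\EB(\F))\cong Rhom(\GG\bu\PB,\F)\cong 0$ is correct granted the intermediate identification $\EB(\F)\cong\F$. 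One small remark: the step ``apply $\EB$ to the triangle to get $\EB(X\bu\PB)\xrightarrow{\sim}\EB(X)$'' is a red herring when $X=\F\in\D_B$, since there $\F\bu\PB\cong\F$ makes it a tautology; and once you have $\EB(\F)\in\D_B$ you do not even need Yoneda --- it is shorter to note that the cone $C$ of $\eta_\F\colon\F\to\EB(\F)$ lies in $\D_B$ (cone of a map in $\D_B$) and also satisfies $C\bu\PB\cong 0$ by the adjunction triangle identity, hence $C\cong C\bu\PB\cong 0$.

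The genuine gap is exactly the one you flag but do not fill: $\EB(\F)\in\D_B$ for $\F\in\D_B$. This is not a decorative verification; it \emph{is} the substance of the corollary. Without it every formal manoeuvre is circular. Concretely, one can show abstractly that $C:=\mathrm{Cone}(\eta_\F)$ always lands in $\D_{T^*Q\setminus B}$ (applying $(-)\bu\PB$ to the unit triangle and using the triangle identity), and also that $Rhom(\GG,\EB(\F))=0$ for $\GG\in\D_{T^*Q\setminus B}$; feeding these into the long exact sequence for $\F\to\EB(\F)\to C$ only yields a surjection $Rhom(\GG,C[-1])\twoheadrightarrow Rhom(\GG,\F)$ with both source objects in $\D_{T^*Q\setminus B}$, and the remaining term one would need to vanish is precisely $Rhom(\D_{T^*Q\setminus B},\D_B)$ --- the statement being proved. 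So a microsupport (or other non-formal) input is unavoidable, and your proposal ends exactly where that input would have to begin. The sketch you give --- tracing the conic bound on $\widehat{\s}$ through $\underline{Rhom}$, $j^!$, and the proper pushforwards in diagram (\ref{diagram}) --- is the right shape of argument, but it is a nontrivial estimate (in particular, $\underline{Rhom}$ only inverts microsupport of the first argument under non-characteristicity hypotheses, which must be checked here), and as written the proposal asserts rather than carries out the expected outcome. Until that estimate is actually performed, the proof is incomplete.
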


It is worth mentioning that the functor $\EB$ is naturally compatible with the family $T$ of translation functors. Take any $c\geq0$ and let $T_c$ be  the corresponding translation endofunctor on the Tamarkin category $\D(Q\x\R)$. Then the functors $\EB$ and $T_c$ commute with each other (that is, $\EB\circ T_c=T_c\circ\EB$). In particular, the following Lipschitz property holds for the functor $\EB$:

\begin{lem}\label{functorial}
	Suppose $(\F_0,\F_1)$ is $(a,b)$-interleaved in $\D(Q\x\R)$ and let $B$ be an admissible open subset of $T^*Q$. Then the pairwise image $(\EB(\F_0),\EB(\F_1))$ under $\EB$ is also $(a,b)$-interleaved in $\D(Q\x\R)$.
\end{lem}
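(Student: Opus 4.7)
The strategy is to simply transport the interleaving data through the functor $\EB$ and rely on the compatibility between $\EB$ and the translations established immediately before the statement. Concretely, suppose we are given morphisms $\F_0 \overset{\alpha}{\underset{\delta}{\rightrightarrows}} T_a \F_1$ and $\F_1 \overset{\beta}{\underset{\gamma}{\rightrightarrows}} T_b \F_0$ satisfying (\ref{interleaving}). Applying $\EB$ and using the identity $\EB \circ T_c = T_c \circ \EB$, I would read the four morphisms as
\begin{equation*}
\EB(\F_0) \overset{\EB(\alpha)}{\underset{\EB(\delta)}{\rightrightarrows}} T_a \EB(\F_1), \qquad \EB(\F_1) \overset{\EB(\beta)}{\underset{\EB(\gamma)}{\rightrightarrows}} T_b \EB(\F_0),
\end{equation*}
which are the candidate data for an $(a,b)$-interleaving of the pair $(\EB(\F_0), \EB(\F_1))$.

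Next, since $\EB$ is a functor it respects composition, so
\begin{equation*}
[\EB(\F_0) \xrightarrow{\EB(\alpha)} T_a \EB(\F_1) \xrightarrow{T_a \EB(\beta)} T_{a+b} \EB(\F_0)] \;=\; \EB\bigl(\tau_{a+b}(\F_0)\bigr),
\end{equation*}
and likewise for the other composition. Hence the lemma reduces to the naturality identity
\begin{equation*}
\EB(\tau_c(\F)) \;=\; \tau_c(\EB(\F)) \qquad \text{in } \hom(\EB(\F), T_c \EB(\F))
\end{equation*}
for $c = a+b$ and $\F = \F_0, \F_1$.

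To establish this identity I would unwind $\tau_c(\F)$ as convolution with the morphism of sheaves $\K_{\geq 0} \to \K_{\geq c}$ on $\R$ and track it through the six-functor expression
\begin{equation*}
\EB(\F) = R{\pi_{Q1R1}}_* R{j_1}_* R{\pi_{IQ1R1}}_* j^! \underline{Rhom}\bigl({\pi_{IQ12R2}}^{-1}(\widehat{\s}), {\pi_\sigma}^!(\F)\bigr).
\end{equation*}
The translation endofunctor $T_c$ on the domain acts on the factor $\R$, while on the codomain it acts on $\R_1$; through the summation map $\sigma:\R_1\x\R_2\to\R$ appearing in $\pi_\sigma$, both actions correspond to the same translation on $\R_1\x\R_2$, and this action leaves the pullback ${\pi_{IQ12R2}}^{-1}(\widehat{\s})$ invariant (since $\widehat{\s}$ depends only on the $\R_2$-coordinate). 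Therefore $\pi_\sigma^!$ sends $\tau_c(\F)$ to the morphism induced by the same $\K_{\geq 0} \to \K_{\geq c}$ at the $\R_1$-level, and all subsequent operations $\underline{Rhom}(-)$, $j^!$, $R{\pi_{IQ1R1}}_*$, $R{j_1}_*$, $R{\pi_{Q1R1}}_*$ commute with translation on $\R_1$ by base change and functoriality, yielding exactly $\tau_c(\EB(\F))$.

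The main obstacle is precisely this last compatibility: while $\EB\circ T_c = T_c\circ \EB$ holds at the level of endofunctors, the equality of the natural transformations $\EB(\tau_c)$ and $\tau_c(\EB)$ is not formally automatic and requires one to trace the generating morphism $\K_{\geq 0}\to \K_{\geq c}$ through each step of diagram (\ref{diagram}). Once this naturality is in hand, the two equations in (\ref{interleaving}) for $(\F_0, \F_1)$ transport verbatim to the pair $(\EB(\F_0), \EB(\F_1))$, proving the claim.
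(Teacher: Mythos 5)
Your proposal is correct and takes essentially the same approach as the paper: one transports the interleaving morphisms through $\EB$ and uses the commutativity $\EB\circ T_c = T_c\circ\EB$ that the paper establishes just before the lemma by tracking the translation by $c$ units through each map in diagram (\ref{diagram}). Your remark that the real content is the equality of natural transformations $\EB(\tau_c)=\tau_c(\EB)$ rather than a bare isomorphism of functor compositions is the right subtlety to flag, and it is indeed implicit in the paper's argument, since every compatibility in the chain (${\pi_\sigma}^!\circ T_c = T_c\circ{\pi_\sigma}^!$, invariance of ${\pi_{IQ12R2}}^{-1}\widehat{\s}$, commutation with $j^!$, $R{j_1}_*$, the pushforwards, etc.) is induced by a genuine commutative square of maps of spaces, so the generating morphism $\K_{\geq 0}\to\K_{\geq c}$ transports verbatim.
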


\subsection{Energy of sheaf relative to microlocal projector}

\begin{defn}[Categorical energy]\label{categorical energy}
	Let $B$ be an admissible open subset of $T^*Q$. For any object  $\F\in\D(Q\x\R)$ we define its categorical energy, relative to $B$, by the number
	\begin{equation}
	e_B(\F) := d(0,\EB(\F)).
\end{equation}
\end{defn}

It is the categorical energy $e_B(-)$ that measures how orthogonal a sheaf is to the subcategory $\D_B(Q\x\R)$. In particular $e_B(\F)=0$ if $\F$ is orthogonal to $\D_B(Q\x\R)$ with respect to the sheaf-theoretic inner product $R\hom$.

Recall that the displacement energy of two subsets $A$ and $B$ of a symplectic manifold is defined via the variation over all Hamiltonian disjunctions:
$$
e(A,B):= \inf_{H}\{ \|H\| \,|\, h_1(A)\cap B=\emptyset \},
$$
where $h$ denotes the Hamiltonian diffeomorphism generated by $H$, and the mean oscillation norm of $H$ is given by 
$$
\|H\|:=\int_0^1 (\max H_s - \min H_s) ds .
$$

The following comparison theorem justifies the idea that our notion of the categorical energy $e_B(-)$ captures the symplectic energy necessary to "drag" a sheaf away from being microsupported somewhere in the open subset $B$.

\begin{thm}\label{comparison}
Suppose $A$ is a closed subset of $T^*Q$ and $B$ is admissible, then for any object $\F\in\D_A(Q\x\R)$ we have the following categorical-Hofer inequality
\begin{equation}
e_B(\F)\leq e(A,B).
\end{equation}
\end{thm}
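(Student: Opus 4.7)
The plan is to combine the Asano--Ike stability theorem (\ref{bound}) with the vanishing property of $\EB$ (Proposition \ref{vanishing}) via the functoriality of $\EB$ on interleavings (Lemma \ref{functorial}). The idea is that a nearly optimal Hamiltonian that displaces $A$ from $B$ produces, via sheaf quantization, a one-parameter family of sheaves whose terminal object is microsupported outside $B$; applying $\EB$ then turns this into an interleaving from $\EB(\F)$ to $0$ of size at most $\|H\|$.

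\medskip

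\noindent\textbf{Main steps.} Fix $\epsilon>0$ and, by definition of $e(A,B)$, choose a compactly supported Hamiltonian function $H:I\x T^*Q\to\R$ generating $h$ with $h_1(A)\cap B=\emptyset$ and $\|H\|\leq e(A,B)+\epsilon$. Let $\s\in D(I\x Q\x\R\x Q\x\R)$ be the associated sheaf quantization from Theorem \ref{quantization}. Set $\F':=\s\circ\F\in\D(I\x Q\x\R)$ and $\F_s:=\F'|_{Q\x\R\x\{s\}}$, so that $\F_0\cong\F$ because $\s|_{s=0}\cong\K_{\Delta_M}$ represents the identity on $\D(Q\x\R)$. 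By Remark \ref{quantizationrmk}(2), since $\F\in\D_A(Q\x\R)$ one has $\F_1\in\D_{\tilde h_1(A)}(Q\x\R)=\D_{h_1(A)}(Q\x\R)$. As $h_1(A)$ is closed and $h_1(A)\subset T^*Q\setminus B$, microsupport monotonicity yields $\D_{h_1(A)}(Q\x\R)\subset\D_{T^*Q\setminus B}(Q\x\R)$, so Proposition \ref{vanishing} gives $\EB(\F_1)\cong 0$. Theorem \ref{bound} then provides $d(\F_0,\F_1)\leq\|H\|$.

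\medskip

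\noindent\textbf{Conclusion via functoriality of $\EB$.} Lemma \ref{functorial} asserts that $\EB$ sends every $(a,b)$-interleaving pair to an $(a,b)$-interleaving pair with the same parameters; taking the infimum over admissible $(a,b)$ gives the contraction
\begin{equation*}
d(\EB(\F_0),\EB(\F_1))\leq d(\F_0,\F_1)\leq\|H\|.
\end{equation*}
Since $\EB(\F_0)\cong\EB(\F)$ and $\EB(\F_1)\cong 0$, the definition of categorical energy yields
\begin{equation*}
e_B(\F)=d(0,\EB(\F))\leq\|H\|\leq e(A,B)+\epsilon.
\end{equation*}
Letting $\epsilon\to 0$ finishes the proof.

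\medskip

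\noindent\textbf{Expected obstacle.} All the ingredients are in place, so the argument is essentially bookkeeping; the only real subtlety is checking compatibility in two places. First, one should verify that the identification $\F_0\cong\F$ is canonical at the level of the morphisms used in the interleaving constructed by Asano--Ike, so that the resulting $(a,b)$-interleaving really is between $\F$ and $\F_1$ (and not merely between $\F_1$ and some other representative of $\F$). Second, one must confirm that the zero object plays the role one expects for the interleaving distance---that is, that an $(a,b)$-interleaving between $\EB(\F)$ and $0$ does bound $d(0,\EB(\F))$ in the sense of Definition of $e_B$---which is immediate from the definition once both morphisms to the zero object are necessarily zero. Neither step requires new ideas beyond the machinery already developed.
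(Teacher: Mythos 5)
Your argument is the same as the paper's: choose a near-optimal disjoining Hamiltonian, sheaf-quantize it, observe that $\EB$ kills the time-one slice because $h_1(A)\cap B=\emptyset$ (Proposition \ref{vanishing} and Remark \ref{quantizationrmk}), and then combine Lemma \ref{functorial} with Theorem \ref{bound} to bound $d(0,\EB(\F))$ by $\|H\|$. Every ingredient and the order in which they are deployed matches the paper's proof; the only cosmetic difference is that you spell out the infimum via an explicit $\epsilon$ and note the category inclusion $\D_{h_1(A)}\subset\D_{T^*Q\setminus B}$, which the paper leaves implicit.
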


\begin{proof}
	
	 Let $H:I\x T^*Q\rightarrow \R$ be a Hamiltonian function. We assume that $H$ generates a compactly supported Hamiltonian diffeomorphism $h:I\x T^*Q\rightarrow T^*Q$ such that its time-one map $h_1$ disjoints $A$ from $B$. Let $\s\in\D(I\x Q\x\R\x Q\x\R)$ be the sheaf quantization associated with $h$ which can be obtained from Theorem \ref{quantization} (see also Remark \ref{quantizationrmk}). Then for any object $\F\in\D_A(Q\x\R)$ we have $\F\circ\s_1\in\D_{h_1(A)}(Q\x\R)$.
	 
	 By the disjunction condition $h_1(A)\cap B=\emptyset$ and the microlocal vanishing property (Proposition \ref{vanishing}) one get $\EB(\F\circ\s_1)=0$. Therefore the categorical energy of $\F$, relative to $B$, is given by	 
	 \begin{equation}
	 e_B(\F)=d(0,\EB(\F))=d(\EB(\F\circ\s_1),\EB(\F)).
	 \end{equation}
	 From the compatibility lemma (Lemma \ref{functorial}) and the stability theorem (Theorem \ref{bound}) we deduce that 
	 \begin{equation}\label{inequality}
	 e_B(\F)\leq d(\F\circ\s_1,\F)\leq \|H\|.
	 \end{equation}
	 Therefore by taking the infimum of $\|H\|$ over all such Hamiltonians $H$ we get
	\begin{equation}
	e_B(\F)\leq e(A,B).
	\end{equation}
\end{proof}

Let us close this subsection with the following remark. In the proof of Theorem \ref{comparison}, the first part of the inequality (\ref{inequality}) 
$$
e_B(\F)\leq d(\F\circ\s_1,\F)
$$ 
is exhibited in a form of capacity-energy-like inequality. Indeed, in \cite[Lemma 7, Remark 15, Remark 16]{HLS15} it is shown that Lisi-Rieser's relative capacity (we refer the reader to \cite{LR20} for its original definition) does obey a  capacity-energy type inequality. Needless to say, such an inequality always plays an essential role in the symplectic $C^0$-rigidity phenomenon such as the rigidity of coisotropic submanifolds and their characteristic foliations, which is among the main results of \cite{HLS15}. It would be very intriguing to see whether our categorical notion $e_B(-)$ serves as a sheaf-theoretic version of Lisi-Rieser's capacity of the open subset $B$ relative to a given sheaf $\F$.

\section{Displacement energy of Lagrangian from open set}

This section is devoted to the sheaf-theoretic estimate of the Hofer  displacement energy of mixed type, namely a Lagrangian intersecting with an open subset. Let us start with the nontriviality of the relative categorical energy $e_B$, as shown in the following example:

\begin{prop}\label{eg}
Let $Q$ be the Euclidean space of dimension $n$. Let $B=B(r)=\{q^2+p^2<r^2\}$ be a Darboux open ball of $T^*Q$ and let $\F=\K_{Q\x\R_{\geq0}}\in\D(Q\x\R)$ be the sheaf quantization of the zero section. Then one has 
$$
e_B(\F)\geq \f{1}{2}\pi r^2.
$$
\end{prop}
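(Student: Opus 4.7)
The plan is to show $\tau_c(\EB(\F)) \neq 0$ in the derived category for every $c < \f{1}{2}\pi r^2$, which by the identification $e_B(\F) = d(0, \EB(\F)) = \inf\{c \geq 0 \,|\, \tau_c(\EB(\F)) = 0\}$ yields the desired lower bound. The adjunction $R\hom(\GG \bu \PB, \F) \cong R\hom(\GG, \EB(\F))$, combined with $\GG \bu \PB \cong \GG$ for $\GG \in \D_B$ (Theorem \ref{projector}) and the commutation $\EB \circ T_c = T_c \circ \EB$, produces an isomorphism $R\hom(\GG, \EB(\F)) \cong R\hom(\GG, \F)$ under which $\tau_c$ on $\EB(\F)$ corresponds to $\tau_c$ on $\F$. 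It thus suffices, for each such $c$, to exhibit a test object $\GG_c \in \D_B(Q\x\R)$ together with a morphism $\GG_c \to \F$ whose composition with $\tau_c$ is non-zero in $R\hom(\GG_c, T_c\F)$.

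For the construction, I would start from a smooth function $F: Q \to \R$ whose Lagrangian graph $\Gamma_{dF}$ lies strictly inside $B(r)$, i.e., $|q|^2 + |dF(q)|^2 < r^2$ pointwise, and whose oscillation $\max F - \min F$ exceeds $c$. Such $F$ exists for every $c < \f{1}{2}\pi r^2$: in dimension one take $F'(q)$ slightly below $\sqrt{r^2 - q^2}$ on $(-r, r)$ and zero outside, producing oscillation approaching $\int_{-r}^{r}\sqrt{r^2 - s^2}\,ds = \f{1}{2}\pi r^2$; in higher dimensions localize the one-dimensional profile along a single coordinate via a sharp transverse cutoff. After normalizing so that $\max F = 0$, set $\GG_c := \K_{\{(q,z) \,|\, z \geq F(q)\}} \bu \PB$, which lies in $\D_B$ as the image of the projector. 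Because $F \leq 0$, the inclusion $\{z \geq 0\} \subset \{z \geq F(q)\}$ supplies a canonical non-zero restriction morphism $\K_{\{z \geq F\}} \to \F = \K_{Q\x\R_{\geq 0}}$; via the projector triangle $\GG_c \to \K_{\{z \geq F\}} \to \K_{\{z \geq F\}} \bu \QB \xrightarrow{+1}$ it descends to $\alpha_c: \GG_c \to \F$, whose composition with $\tau_c$ corresponds at the level of $\K_{\{z \geq F\}}$ to the iterated restriction $\K_{\{z \geq F\}} \to \K_{\{z \geq c\}}$, canonical and non-zero as a morphism of constant sheaves on nested closed sets.

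The principal obstacle is to verify that this non-vanishing is preserved by the descent through $\PB$. Applying $R\hom(-, T_c\F)$ to the projector triangle, the kernel of $R\hom(\K_{\{z \geq F\}}, T_c\F) \to R\hom(\GG_c, T_c\F)$ equals the image of $R\hom(\K_{\{z \geq F\}} \bu \QB, T_c\F)$, and the crux is that the class of the restriction morphism avoids this image whenever $c < \max F - \min F$. Since $\K_{\{z \geq F\}} \bu \QB$ lies in $\D_{T^*Q \setminus B}$ by the semi-orthogonal construction of $\PB$ and $\QB$, I would control this kernel via a diagram chase combining the microlocal vanishing of $\EB$ on $\D_{T^*Q \setminus B}$ (Proposition \ref{vanishing}) with the explicit harmonic structure of $\widehat{\s}$ defining $\PB$. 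It is at this final step that the Darboux symmetry of $B(r)$ and the identification of $\f{1}{2}\pi r^2$ with half the symplectic area of the ball enter the argument in essential ways.
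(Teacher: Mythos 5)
Your proposal takes a genuinely different route from the paper's proof. The paper works directly on $\EB(\F)$: it invokes the explicit periodic structure of the ball projector, writes the generating function $S_s$ for the flow of $q^2 + p^2$, identifies a region $\Sigma\subset Q\x Q\x\R$, computes the piece $\mathcal{E}_{B,\Sigma}(\F)=\K_{\{-f_1\leq z<-f_2\}}[n+1]$, and restricts to the stalk over $(0,0)$ to read off $\K_{[0,\f{\pi}{2}r^2)}$, from which the non-vanishing of $\tau_c$ for $c<\f{\pi}{2}r^2$ is immediate. You instead dualize through the adjunction $R\hom(\GG,\EB(\F))\cong R\hom(\GG,\F)$ for $\GG\in\D_B$ and try to exhibit a test object $\GG_c=\K_{\{z\geq F\}}\bu\PB$ together with a class in $\hom(\GG_c,\F)$ whose image under $\tau_c(\F)_*$ is nonzero. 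The abstract framework (including the identification $d(0,\GG)=\inf\{c:\tau_c(\GG)=0\}$ and the naturality of the adjunction with respect to $\tau_c$) is fine, but the substantive work is missing.

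The crucial step, which you correctly flag as ``the principal obstacle,'' is precisely what the paper's explicit computation supplies. You need the nonzero restriction class in $\hom(\K_{\{z\geq F\}},T_c\F)$ to survive precomposition with $\GG_c\to\K_{\{z\geq F\}}$; applying $\hom(-,T_c\F)$ to the triangle $\GG_c\to\K_{\{z\geq F\}}\to\K_{\{z\geq F\}}\bu\QB\xrightarrow{+1}$, this means ruling out that the class lies in the image of $\hom(\K_{\{z\geq F\}}\bu\QB,T_c\F)$. That group does not obviously vanish: $\K_{\{z\geq F\}}\bu\QB$ lies in $\D_{T^*Q\setminus B}$, but $T_c\F$ is microsupported on the zero section, which meets $B$, so neither semiorthogonality nor Proposition~\ref{vanishing} applies. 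Your remedy --- a ``diagram chase combining the microlocal vanishing \dots\ with the explicit harmonic structure of $\widehat{\s}$'' --- is a pointer to the very structure the paper unpacks, not an argument; filling the gap would require the same explicit description of $\PB$ (or $\QB$) the paper carries out, so the adjunction reduction does not actually bypass the hard part.

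There is a secondary problem with the high-dimensional construction of $F$. A globally defined $F:\R^n\to\R$ cannot have $\Gamma_{dF}\subset B(r)$ (for $|q|\geq r$ the graph is automatically outside), so the hypothesis needs reformulation. More seriously, a ``sharp transverse cutoff'' of the one-dimensional profile fails: if $F(q)=g(q_1)\phi(q_\perp)$ and $g$ has oscillation near $\f{1}{2}\pi r^2$, then $|\nabla_\perp F|=|g|\,|\nabla\phi|$ blows up for a sharp $\phi$ and remains non-negligible even for a gradual $\phi$ supported in $\{|q_\perp|<r\}$, destroying the constraint $|q|^2+|dF|^2<r^2$. For $n\geq 2$ it is unclear the oscillation can approach $\f{1}{2}\pi r^2$ at all, so even modulo the first gap your argument would at best yield a weaker bound in higher dimensions.
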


\begin{proof}

We recall that there is a periodic structure of the microlocal ball projector as described in \cite[Section 3.4]{Chiu17}, where it is first introduced and applied to the proof of the nonsqueezing property of contact balls. We would also like to refer the reader to  \cite{Guillermou20} for its application to the nonsqueezing property of symplectic balls, and a related structure for the sheaf quantization associated with the constant speed geodesic flow.

Let $S:I\x Q\x Q\rightarrow \R$ be the generating function of the Hamiltonian flow associated with $F(q,p)=q^2+p^2$. For $0<s<\f{\pi}{2}$ it has the form
\begin{equation}
S_s(q_1,q_2)=\f{\cos(2s)}{2\sin(2s)}({q_1}^2+{q_2}^2) - \f{1}{\sin(2s)}q_1 q_2.
\end{equation}

Consider a function $f:(0,\f{\pi}{2})\x Q_1\x Q_2 \rightarrow \R$ given by 
\begin{equation}
f(s)(q_1,q_2)=-S_s(q_1,q_2)-sr^2
\end{equation}
and consider the domain $N\subset Q\x Q$ defined by
\begin{equation}
 N=\{(q_1,q_2)\,|\, |q_1|\leq r, |q_2|\leq r  \}.
\end{equation}

Fix a point $(q_1,q_2)\in N$. The critical point of $f(s)$ satisfies the following equation
\begin{equation}
-r^2=\f{\partial S}{\partial s}= -\f{{q_1}^2+{q_2}^2}{\sin^2(2s)}+ \f{2\cos(2s)q_1 q_2}{\sin^2(2s)},
\end{equation}
which is a quadratic equation in $\cos(2s)$:
\begin{equation}
r^2 \cos^2(2s)-2q_1q_2\cos(2s)+({q_1}^2+{q_2}^2-r^2)=0.
\end{equation}

Note that for $|\xi|>1$, the quantity
\begin{equation}
r^2 \xi^2-2q_1q_2\xi+({q_1}^2+{q_2}^2-r^2)=(\xi q_1-q_2)^2 + (r^2-{q_1}^2)(\xi^2-1)
\end{equation}
remains positive since the equalities $|q_1|=r$ and $\xi q_1 = q_2$ cannot hold simultaneously (otherwise they would lead to $|q_2|>r$). So for the critical equation to possess a solution we must have $|\xi|\leq1$, for which we can assign some critical point $0\leq s\leq\f{\pi}{2}$ of $f$ such that $\cos(2s)=\xi$. The function $f(s)$ has two critical points  $s_1$ and $s_2$. Here we fixed their order by setting $0\leq s_1\leq s_2\leq \f{\pi}{2}$. It is always true that $f(s_1)\geq f(s_2)$.

Let us define a subset $\Sigma$ of $Q\x Q\x \R$ by
\begin{equation}\label{Sigma}
\Sigma=\{(q_1,q_2,z)| (q_1,q_2)\in N, f(s_2)(q_1,q_2)\leq z < f(s_1)(q_1,q_2)\}.
\end{equation}

Note that although the function $f(s)$ is defined on $(0,\f{\pi}{2})$, we can still include certain degenerated values $f(0)$ and $f(\f{\pi}{2})$ in the definition of $\Sigma$. For example when $q_1=q_2=q$ it is easy to see that the quantity
$$
f(s)(q,q)=\f{1-\cos(2s)}{\sin(2s)}q^2 -sr^2
$$ 
approaches to $0$ as $s$ approaches to $0$. The critical equation of $f(s)(q,q)$ possesses solutions satisfying $\cos(2s)=1$ or $\cos(2s)=\f{2q^2-r^2}{r^2}$. Therefore we have $s_1=0$ and $s_2=\arccos(\f{|q|}{r})$. Similarly for $(q_1,q_2)=(q,-q)$ one has $s_1=\arcsin(\f{|q|}{r})$ and $s_2=\f{\pi}{2}$. In particular, at  $(q_1,q_2)=(0,0)$ we have $s_1=0$ and $s_2=\f{\pi}{2}$. In this case we write $f(s_1)=0$ and $f(s_2)=-\f{\pi}{2}r^2$. 

This way we have
\begin{equation}\label{persistence}
\K_{\Sigma}|_{\{(0,0)\}\x\R} = \K_{\{ -\f{\pi}{2}r^2\leq z < 0  \}}.
\end{equation}

With the above notion we define (in an essentially unique way) an object $\Gamma\in\D(Q\x Q\x\R)$ to be the nontrivial extension between two opposite constant sheaves:
\begin{equation}
\K_\Sigma \rightarrow \Gamma \rightarrow T_{\f{\pi}{2}r^2} \K_{(\Psi(\Sigma))}[-n] \xrightarrow{+1}
\end{equation}
where $\Psi:(q_1,q_2)\mapsto(q_1,-q_2)$ is the reflection with respect to the skew diagonal of $Q\x Q$. This is due to the simple fact that the Hamiltonian flow after $\f{\pi}{2}$ units of time gives a half-rotation. The object $\Gamma$ is obtained by gluing $\K_\Sigma$ and $T_{\f{\pi}{2}r^2} \K_{(\Psi(\Sigma))}$ along the diagonal $\{(q,q,0) \,|\, q\in Q \}$ of $Q\x Q\x\{0\}$. The fundamental class of the diagonal contributes to the cohomological shift of degree $[-n]$.

Let $\PB$ be the microlocal projector associated with the Darboux ball $B=B(r)$. The periodic structure of $\PB$ is exhibited as the following, for any $m\in\mathbb{Z}$,
\begin{equation}
\PB|_{Q\x Q\x [\f{(2m-1)\pi}{2}r^2,\f{(2m+1)\pi}{2}r^2)}\cong T_{m\pi r^2}(\Gamma) [-mn].
\end{equation}
With this periodic structure the projector $\PB$ is constructed in the sense of homotopy colimit by iteratively gluing on $\Gamma$ along the skew diagonal $\{(q,-q,\f{(2m-1)\pi}{2}) \,|\, q\in Q\}$ of $Q\x Q\x\{\f{(2m-1)\pi}{2}\}$, for $m$ running through $\mathbb{Z}$.

Now we turn to the computation of $\EB(\F)$ where $\F=\K_{Q\x\R_{\geq0}}$. In view of the adjoint relation 
$$
R\hom(\GG\bu\PB,\F)=R\hom(\GG,\EB(\F))
$$
there is a way to interpret the right adjoint functor $\EB$ in terms of its left adjoint, the microlocal projector $\PB$, by unwrapping the convolution over $Q\x\R$:
\begin{equation}\label{badexpression}
\EB(\F)=R{\pi_{Q1R1}}_* \underline{Rhom}({\pi_{Q12R2}}^{-1}(\PB),{\pi_{Q2\sigma}}^! (\F)).
\end{equation}

Here the maps $\pi_{Q1R1}:Q_1\x Q_2\x\R_1\x\R_2\rightarrow Q_1\x\R_1$ and $\pi_{Q12R2}:Q_1\x Q_2\x\R_1\x\R_2\rightarrow Q_1\x Q_2\x\R_2$ are projections. The map $\pi_{Q2\sigma}:Q_1\x Q_2\x\R_1\x\R_2\rightarrow Q_2\x\R$ projects along $Q_1$ and takes the summation $\R_1\x\R_2\rightarrow\R$.

\begin{center}
	\begin{tabular}{c}
				\xymatrix{
			& {Q_1\x Q_2\x\R_1\x\R_2} \ar [d] _{\pi_{Q2\sigma}} \ar@/_/[dl]_{\pi_{Q1R1}}  \ar@/^/[dr]^{\pi_{Q12R2}} &  \\
			{Q_1\x\R_1}  &  {Q_2\x\R} &{ Q_1\x Q_2\x\R_2}.
			}
		
	\end{tabular}
\end{center}

For the sake of computation it is more convenient to stay all the way in the Tamarkin categories $\D$. It turns out that the functor $\underline{Rhom}(-,-)$ is expected to invert the micro-support of the first argument, hence in the above expression (\ref{badexpression}) of $\EB(\F)$ the object $\PB$ (which is situated in $\D$) is not suitable for the functor $\underline{Rhom}$ to begin with. We hereby apply a trick introduced by Guillermou and Schapira in \cite{GS14} to deal with this issue with $ \underline{Rhom}$. Consider an auxiliary isomorphism $\theta:Q_1\x Q_2\x\R_1\x\R_2\rightarrow Q_1\x Q_2\x\R_1\x\R_2$ given by $\theta(q_1,q_2,z_1,z_2)=(q_1,q_2,z_1+z_2,-z_2)$ and let $i:\R_2\rightarrow\R_2$ be the inversion map $i(z_2)=-z_2$. The map $i$ also stands for a product map $i:Q_1\x Q_2\x\R_2\rightarrow Q_1\x Q_2\x\R_2$ with $i(q_1,q_2,z_2)=(q_1,q_2,-z_2)$. It is easy to see that $\theta$ and $i$ satisfy the relations $\theta\circ \theta=Id$, $\pi_{Q1R1}\circ \theta=\pi_{R2\sigma}$, and $i\circ \pi_{Q12R2}\circ \theta= \pi_{Q12R2}$. It is also clear that at the functor level we have $\theta^{-1}=\theta^!=R\theta_*$ and they commute with the $\underline{Rhom}$ functor. 

With the above notations and it follows from (\ref{badexpression}) that
\begin{equation}
\begin{split}
\EB(\F) &
=R{\pi_{Q1R1}}_* \underline{Rhom}({\pi_{Q12R2}}^{-1}(\PB),{\pi_{Q2\sigma}}^! (\F))\\
& =R{\pi_{Q1R1}}_* \underline{Rhom}(\theta^{-1}\circ{\pi_{Q12R2}}^{-1}\circ i^{-1}(\PB), \theta^!\circ{\pi_{Q1R1}}^!(\F))\\
& =R{\pi_{Q1R1}}_* \theta^{-1} \underline{Rhom}({\pi_{Q12R2}}^{-1}\circ i^{-1}(\PB),  {\pi_{Q1R1}}^!(\F) )\\
& =R{\pi_{Q1R1}}_*R\theta_* \underline{Rhom}({\pi_{Q12R2}}^{-1}\circ i^{-1}(\PB),  {\pi_{Q1R1}}^!(\F) )\\
& =R{\pi_{Q2\sigma}}_*  \underline{Rhom}({\pi_{Q12R2}}^{-1} i^{-1}(\PB),  {\pi_{Q1R1}}^!(\F) )  .
\end{split}
\end{equation}

Denote by 
\begin{equation}
{\mathcal{E}_{B,\Sigma}}(\F):= R{\pi_{Q2\sigma}}_*  \underline{Rhom}({\pi_{Q12R2}}^{-1}i^{-1}(\K_\Sigma),  {\pi_{Q1R1}}^!(\F) )
\end{equation}
the object obtained by replacing the term $\PB$ in the above expression  $\EB(\F)$ with $\K_\Sigma$ (see (\ref{Sigma}) for the definition of $\Sigma$).

From the assumption $\F=\K_{Q\x\R_{\geq0}}$ we obtain 
\begin{equation}
{\pi_{Q1R1}}^!(\F)=\K_{Q_1\x Q_2\x\R_{\geq0}\x\R_2}[n+1] ,
\end{equation}
which is itself a constant sheaf concentrated at a single cohomological degree. For simplicity let us write $f_j=f(s_j)(q_1,q_2)$ and $\Sigma=\{f_2\leq z<f_1\}$ and $i(\Sigma)=\{-f_1<z\leq -f_2\}$. The following computation 
\begin{equation}
\begin{split}
{\mathcal{E}_{B,\Sigma}}(\F) & = R{\pi_{Q2\sigma}}_*  \underline{Rhom}({\pi_{Q12R2}}^{-1}i^{-1}(\K_\Sigma),  {\pi_{Q1R1}}^!(\F) )\\
& = R{\pi_{Q2\sigma}}_*  \underline{Rhom}(\K_{i(\Sigma)\x\R_1} , \K_{Q_1\x Q_2\x\R_{\geq0}\x\R_2}[n+1])\\
& = R{\pi_{Q2\sigma}}_*  \K_{\{(q_1,q_2,z_1,z_2)| z_1\geq0,  -f_1(q_1,q_2)\leq z_2 < -f_2(q_1,q_2)   \}}[n+1]\\
&= \K_{\{ (q_1,q_2,z)|  -f_1(q_1,q_2)\leq z < -f_2(q_1,q_2)  \}}[n+1]
\end{split}
\end{equation}
shows that the object ${\mathcal{E}_{B,\Sigma}}(\F)$ is again a constant sheaf concentrated at a single cohomological degree. It follows that the object $\EB(\F)$ is a gradual glue from copies of ${\mathcal{E}_{B,\Sigma}}(\F)$ in the same way as that of $\PB$ from copies of $\K_\Sigma$.

Since the interleaving relation (\ref{interleaving}) concerns only  equations within the degree zero cohomology $\hom(-,T_c(-))$ of the derived morphisms, it is sufficient to show that for $0\leq c< \f{\pi}{2}r^2$ the natural transformation $\tau_c$ gives a nonzero morphism on the object ${\mathcal{E}_{B,\Sigma}}(\F) $. This is achieved by restricting $\tau_c$ to the submanifold $\{(0,0)\}\x\R$ of $Q\x Q\x\R$ as follows.

Recall from (\ref{persistence}) we have $f_1(0,0)=0$ and $f_2(0,0)=-\f{\pi}{2}r^2$ and hence
$$
{\mathcal{E}_{B,\Sigma}}(\F)  |_{\{(0,0)\}\x\R}=\K_{\{ 0\leq z < \f{\pi}{2}r^2 \}}.
$$
It is then obvious that for $0\leq c< \f{\pi}{2}r^2$ the morphism 
$$
\tau_c(\K_{\{ 0\leq z < \f{\pi}{2}r^2 \}}) \in \hom(\K_{\{ 0\leq z < \f{\pi}{2}r^2 \}},T_c\K_{\{ 0\leq z < \f{\pi}{2}r^2 \}})
$$ 
is given by a nontrivial restriction morphism induced by the closed inclusion $[c,\f{\pi}{2}r^2) \subset [0,\f{\pi}{2}r^2)$. In fact the class of this restriction persists through $0\leq c< \f{\pi}{2}r^2$.

\end{proof}

In what follows, we transport the estimate in Proposition \ref{eg} from Euclidean spaces to general cotangent bundles:

\begin{thm}[Relative energy-capacity inequality]\label{energy-capacity}
Let $L$ be a smooth manifold of dimension $n$. Suppose $\jmath:B(r)\hookrightarrow T^*L$ is a symplectically embedded open ball of $T^*L$ relative to the zero section $L$ (that is, $ \jmath^{-1}(L)=\R^n \cap  B(r)$).  Let $U=\jmath(B(r))$, then in $T^*L$  one has
\begin{equation}
e(L,U)\geq \f{1}{2} \pi r^2.
\end{equation}

In particular, suppose $O$ is an open subset of $T^*L$ such that $O\cap L\neq\emptyset$. Then $e(L,O)$ is strictly positive.
\end{thm}

\begin{proof}

Let $B=B(r)$ be the Darboux open ball of radius $r$ centered at the origin of $Q=\R^n$. Let $\jmath: B\hookrightarrow T^*L$ be a symplectic embedding and denote its image by $U=\jmath(B)$ and let $Y=U\cap L$. Let $X=B \cap Q=\{|q|<r\}$ then $\jmath$ is an isomorphism between the pairs $(B,X)$ and $(U,Y)$. We denote by $\imath:X\rightarrow Y$ the restriction of $\jmath$ on $X$. Note that while $\imath$ is an isomorphism from $X$ to $Y$, it is not necessary for $\jmath$ to sends the cotangent subbundle structure of $T^*X$ to that of $T^*Y$.

For $0<\rho<r$, let $W=B(\rho)$ be a smaller open ball of radius $\rho$ centered at the origin of $Q$ and let $V=\jmath(W)$. We have $W\Subset  B$ and $W\cap Q\Subset X$. Similarly we have $V\Subset U$ and $V\cap L\Subset Y$. We write $W=\{F<1\}$ as a sublevel set for a function $F$ and write $U=\jmath (W)=\{G<1\}$ for a function $G$, where $F=G\circ\jmath$.

Given that $\jmath$ is a sympletic isomorphism, $\jmath$ takes the Hamiltonian $\R_s$-action of $F$ to that of $G$ and thus preserves the corresponding Lagrangian suspensions of the total graphs $\Lambda$ (\ref{Lagsuspension}). In view of Theorem \ref{quantization}, this means that $\imath$ respects their sheaf quantizations, that is $\s_F=(\imath\x\imath)^{-1}\s_G$. According to the construction (see the discussion below Theorem \ref{projector}) of the microlocal projectors it is easy to see that $\PW=(\imath\x\imath)^{-1}\PV$. Hence for $\F\in\D(L\x\R)$ one has $(\imath^{-1}\F)\underset{Q}{\bu}\PW=(\imath^{-1}\F)\underset{Q}{\bu}((\imath\x\imath)^{-1}\PV)=\imath^{-1}(\F\underset{L}{\bu}\PV)$. The application of the adjunction to $\imath$ gives
\begin{equation}
\begin{split}
{Rhom}(\F,\imath_*\EW(\K_{X\x\R_{\geq0}})) &  \cong{Rhom}(\imath^{-1}\F,\EW(\K_{X\x\R_{\geq0}}))\\
& \cong{Rhom}((\imath^{-1}\F)\underset{Q}{\bu}\PW,\K_{X\x\R_{\geq0}})\\
&\cong {Rhom}( \imath^{-1}(\F\underset{L}{\bu}\PV),\K_{X\x\R_{\geq0}}  )\\
& \cong {Rhom}(\F\underset{L}{\bu}\PV, \K_{Y\x\R_{\geq0}})\\
& \cong{Rhom}(\F, \EV(\K_{Y\x\R_{\geq0}})).
\end{split}
\end{equation}
This means that
\begin{equation}\label{fun1}
\imath_*\EW(\K_{X\x\R_{\geq0}})\cong\EV(\K_{Y\x\R_{\geq0}}).
\end{equation}
Similarly we have 
\begin{equation}\label{fun2}
\EW(\K_{X\x\R_{\geq0}})\cong  \imath^{-1}\EV(\K_{Y\x\R_{\geq0}}).
\end{equation}

Fix a $c\geq0$. Substitute (\ref{fun1}) and (\ref{fun2}) into the following natural adjunction 
\begin{equation}
{Rhom}(\EV(\K_{Y\x\R_{\geq0}}), \imath_* T_c \EW(\K_{X\x\R_{\geq0}})  )\overset{\mu}{\cong} {Rhom}(\imath^{-1}\EV(\K_{Y\x\R_{\geq0}}), T_c \EW(\K_{X\x\R_{\geq0}})  )
\end{equation}
one gets an equation
\begin{equation}\label{mu1}
{Rhom}(\EV(\K_{Y\x\R_{\geq0}}),  T_c \EV(\K_{Y\x\R_{\geq0}})  ) \overset{\mu}{\cong} {Rhom}(\EW(\K_{X\x\R_{\geq0}}), T_c \EW(\K_{X\x\R_{\geq0}})  ).
\end{equation}

On the other hand, the open inclusion $X\subset Q$ induces the exact triangle 
\begin{equation}\label{help}
 \K_{X\x\R_{\geq0}} \rightarrow \K_{Q\x\R_{\geq0}} \rightarrow \K_{(Q\setminus X)\x\R_{\geq0}} \xrightarrow{+1}.
\end{equation}

Since $ \K_{(Q\setminus X)\x\R_{\geq0}} \in\D_{T^*Q\setminus B}(Q\x\R)$, by Proposition \ref{vanishing} we have 
\begin{equation}
\EW(\K_{(Q\setminus X)\x\R_{\geq0}})=0.
\end{equation}

Then apply the functor $\EW$ to the exact triangle (\ref{help}) we get
\begin{equation}\label{WQ}
\EW( \K_{X\x\R_{\geq0}} )\cong \EW( \K_{Q\x\R_{\geq0}} ),
\end{equation} 

and likewisely for the open inclusion $Y\subset L$ we get 
\begin{equation}\label{VL}
\EV( \K_{Y\x\R_{\geq0}} )\cong \EV( \K_{L\x\R_{\geq0}} ).
\end{equation} 

Substitute (\ref{WQ}) and (\ref{VL}) into the adjunction equation (\ref{mu1}) then it follows that
\begin{equation}\label{mu2}
{Rhom}(\EV(\K_{L\x\R_{\geq0}}),  T_c \EV(\K_{L\x\R_{\geq0}})  ) \overset{\mu}{\cong} {Rhom}(\EW(\K_{Q\x\R_{\geq0}}), T_c \EW(\K_{Q\x\R_{\geq0}})  ),
\end{equation}
and in the zeroth cohomology of (\ref{mu2}) there is a natural isomorphism 
\begin{equation}
\mu:\tau_c(\EV(\K_{L\x\R_{\geq0}})) \mapsto \tau_c(\EW(\K_{Q\x\R_{\geq0}})).
\end{equation}

Thus from the construction $W=B(\rho)$ and Proposition \ref{eg} one obtains
\begin{equation}\label{one}
e_V(\K_{L\x\R_{\geq0}}) = e_W(\K_{Q\x\R_{\geq0}}) \geq \dfrac{1}{2}\pi \rho^2.
\end{equation}
On the other hand, from Theorem \ref{comparison} and the fact that $V\subset U$, we deduce that
\begin{equation}\label{two}
e(L,U)\geq e(L,V)\geq  e_V(\K_{L\x\R_{\geq0}}).
\end{equation}
By (\ref{one}) and (\ref{two}) we get the lower bound estimate in terms of $\rho$:
\begin{equation}\label{three}
e(L,U) \geq \dfrac{1}{2}\pi \rho^2.
\end{equation}
Since (\ref{three}) is valid for arbitrary $0<\rho<r$, it follows that 
\begin{equation}\label{four}
e(L,U) \geq \dfrac{1}{2}\pi r^2 ,
\end{equation}
as claimed.

For $O\cap L \neq\emptyset$ one picks a point $x\in O\cap L$. For $r$ small enough there exists a symplectic open embedding $\jmath_r : B(r)\hookrightarrow O$ which is centered at $x$ and relative to $L$. By (\ref{four}) we have $e(L,O)\geq e(L,\jmath_r(B(r)))\geq \f{1}{2}\pi r^2 >0$.

\end{proof}

\section*{Acknowledgements} Part of the work on this paper was carried out while the author was visiting Tel Aviv University in the spring of 2018. He hereby thanks Tel Aviv University and the event organizers for their hospitality during the visit. He warmly thanks Leonid Polterovich and Jun Zhang for many stimulating discussions. He is also grateful to Tomohiro Asano and Yuichi Ike for their carefully reading the manuscript and pointing out some mistakes.

\bibliographystyle{amsplain}

\end{document}